\newtheorem{theorem}{Theorem}[subsection]
\newtheorem{proposition}[theorem]{Proposition}
\newtheorem{lemma}[theorem]{Lemma}
\newtheorem*{open1}{Open Question 1}
\newtheorem*{open2}{Open Question 2}
\theoremstyle{definition}
\theoremstyle{remark}
\newtheorem{remark}[theorem]{Remark}
\newtheorem{req}{Requirement}
\numberwithin{equation}{section}
\begin{document}

\bibliographystyle{plain}

\title{Spectra of weighted composition operators induced by automorphisms}

\author[Y.X. Gao and Z.H. Zhou] {Yong-Xin Gao and Ze-Hua Zhou$^*$}
\address{\newline  Yong-Xin Gao\newline School of Mathematics,
Tianjin University, Tianjin 300354, P.R. China.}

\email{tqlgao@163.com}

\address{\newline Ze-Hua Zhou\newline School of Mathematics, Tianjin University, Tianjin 300354,
P.R. China.}
\email{zehuazhoumath@aliyun.com; zhzhou@tju.edu.cn}

\keywords{Weighted composition operator; Spectrum; Automorphism}
\subjclass[2010]{primary 47B38, 32A30; secondary 32H02, 47B33}

\date{}
\thanks{\noindent $^*$Corresponding author.\\
This work was supported in part by the National Natural Science
Foundation of China (Grant Nos. 11371276; 11301373; 11401426).}

\begin{abstract}
In the paper we give the results about the spectra of non-invertible weighted composition operators induced by automorphisms on several Hilbert spaces, such as Hardy-Hilbert space $H^2(\mathbb{D})$ and weighted Bergman spaces $A_\alpha^2(\mathbb{D})$.
\end{abstract}

\maketitle

\section{Introduction}

\subsection{Background}
Let $\mathbb{D}$ be the unit disk on the complex plane $\mathbb{C}$. The Hilbert-Hardy space $H^2(\mathbb{D})$ is the set of analytic functions on $\mathbb{D}$ such that
$$||f||_{H^2}^2=\sup_{0<r<1}\int_{0}^{2\pi}|f(re^{i\theta})|^2\frac{d\theta}{2\pi}<\infty.$$
For $\alpha>-1$, the weighted Bergman space $A_\alpha^2(\mathbb{D})$ is the set of analytic functions on $\mathbb{D}$ such that
$$||f||_{A_\alpha^2}^2=\int_{\mathbb{D}}|f(z)|^2dA_\alpha(z)<\infty,$$
where
$$dA_\alpha(z)=\frac{1+\alpha}{\pi}(1-|z|^2)^\alpha dA(z)$$
and $dA$ is the Lebesgue area measure on $\mathbb{D}$.

Let $\varphi$ be an analytic map from $\mathbb{D}$ into itself and $\psi$ be an analytic function on $\mathbb{D}$. Then we can define a weighted composition operator $C_{\psi,\varphi}$ on each of the spaces above, providing it is bounded, by
$$C_{\psi,\varphi}f=\psi\cdot f\comp\varphi$$
for all $f$ in the space. By taking $\psi\equiv 1$ we get the composition operator $C_\varphi$, and when $\varphi$ is identity on $\mathbb{D}$, we get the analytic Toeplitz  operator $M_\psi$. However, it is not always the case that $C_{\psi,\varphi}$ can be written as the composition of the two operators $M_\psi$ and $C_\varphi$, see \cite{Gun2}.

In 1964, Forelli in his paper \cite{For} showed a strong connection between the weighted composition operators and the isometries on Hardy spaces. Ever since then, the properties of weighted composition operators have been actively
investigated, take \cite{Bou1, Bou2, HO} for examples as some recent results.

The spectra of weighted composition operators were firstly considered in 1978 by Kamowitz, in his papers \cite{Kam1} and \cite{Kam2}. In \cite{Kam1}, he determined the spectra of weighted composition operators with automorphism symbols on the disk algebra $A(\mathbb{D})$.

In 2011, Gunatillake \cite{Gun1} investigated the spectra of $C_{\psi,\varphi}$ on $H^2(\mathbb{D})$ when it is invertible. Then Hyv\"{a}rinen et al. \cite{Hyv} got better results on a large class of analytic function spaces and Chalendar et al. \cite{Par} extended the results to the Dirichlet space.

\subsection{Main Results}

In this paper, we investigate the spectrum of $C_{\psi,\varphi}$ on $H^2(\mathbb{D})$ and $A_\alpha^2(\mathbb{D})$ when $\varphi$ is an automorphism of $\mathbb{D}$. Remember that the automorphisms of $\mathbb{D}$ can be classified according to the location of their fixed points:

$\bullet$ $\varphi$ is called elliptic if it has a unique fixed point in $\mathbb{D}$.

$\bullet$ $\varphi$ is called hyperbolic if it has two distinct fixed points on $\partial\mathbb{D}$.

$\bullet$ $\varphi$ is called parabolic if it has only one fixed point on $\partial\mathbb{D}$.

Gunatillake \cite{Gun1} proved that $C_{\psi,\varphi}$ is invertible on $H^2(\mathbb{D})$ if and only if both $\psi$ and $\frac{1}{\psi}$ belong to $H^\infty(\mathbb{D})$ and $\varphi$ is an automorphism of $\mathbb{D}$.  The same result holds on $A_\alpha^2(\mathbb{D})$. Then he investigated the spectrum of $C_{\psi,\varphi}$ when it is invertible. However, most of his results are got under the assumption that $\psi$ is continuous up to the boundary.

The discussion was finally completed by \cite{Gun1}, \cite{Hyv} and \cite{No}. Their results are as follows.

$\bullet$ Let $\varphi$ be an elliptic automorphism with fixed point $a\in\mathbb{D}$ and $\psi\in H^\infty(\mathbb{D})$. Suppose $C_{\psi,\varphi}$ is invertible. If $\varphi'(a)^n=1$ for some integer $n\in\mathbb{N}$ and $n_0$ is the smallest such integer, then the spectrum of $C_{\psi,\varphi}$ on either $H^2(\mathbb{D})$ or $A_\alpha^2(\mathbb{D})$ is the closure of the set
$$\{\lambda : \lambda^{n_0}=\prod_{k=0}^{n_0-1}\psi(\varphi_k(z)),z\in\mathbb{D}\}.$$
If $\varphi'(a)^n\ne 1$ for all $n\in\mathbb{N}$ and $\psi\in A(D)$, then the spectrum of $C_{\psi,\varphi}$ on either $H^2(\mathbb{D})$ or $A_\alpha^2(\mathbb{D})$ is the circle
$$\left\{\lambda:|\lambda|=|\psi(a)|\right\}.$$

$\bullet$ Let $\varphi$ be a parabolic automorphism with Denjoy-Wolff point $a\in\partial\mathbb{D}$ and $\psi\in A(\mathbb{D})$. If $C_{\psi,\varphi}$ is invertible, then the spectrum of $C_{\psi,\varphi}$ on either $H^2(\mathbb{D})$ or $A_\alpha^2(\mathbb{D})$ is the circle
$$\left\{\lambda:|\lambda|=|\psi(a)|\right\}.$$

$\bullet$ Let $\varphi$ be a hyperbolic automorphism with Denjoy-Wolff point $a\in\partial\mathbb{D}$ and the other fixed point $b\in\partial\mathbb{D}$. Assume that $\psi\in A(\mathbb{D})$. If $C_{\psi,\varphi}$ is invertible, then on $H^2(\mathbb{D})$ the spectrum of $C_{\psi,\varphi}$ is
$$\left\{\lambda : \min\{|\psi(a)|s^{1/2},|\psi(b)|s^{-1/2}\}\leqslant|\lambda|\leqslant \max\{|\psi(a)|s^{1/2},|\psi(b)|s^{-1/2}\}\right\},$$
and on $A_\alpha^2(\mathbb{D})$ the spectrum of $C_{\psi,\varphi}$ is
\begin{align*}
\Big{\{}\lambda : \min\{|\psi(a)|s^{(\alpha+2)/2},&|\psi(b)|s^{-(\alpha+2)/2}\}\leqslant|\lambda|\leqslant
\\&\max\{|\psi(a)|s^{(\alpha+2)/2},|\psi(b)|s^{-(\alpha+2)/2}\}\Big{\}}.
\end{align*}
Here $s=\varphi'(a)^{-1}=\varphi'(b)$.

The results are respectively Theorem 3.1.1 and Theorem 3.2.1 in \cite{Gun1}, Theorem 4.3 and Theorem 4.9 in \cite{Hyv} and Corollary 3.12 in \cite{No}.

There remain the cases when $C_{\psi,\varphi}$ is not invertible on $H^2(\mathbb{D})$ and $A_\alpha^2(\mathbb{D})$. The aim of this paper is to get results for these cases. Moreover, we point out in this paper that for the hyperbolic and parabolic cases, one can replace the rather strong assumption that $\psi$ is continuous up to the boundary with a reasonable weaker one: the continuity of $\psi$ at the fixed points of $\varphi$ is sufficient.

In order to reduce the length of this paper, we will make detailed discussion only on $H^2(\mathbb{D})$. The situation on $A_\alpha^2(\mathbb{D})$ is almost all the same, we will just give a brief illustration in the last section.

Our main results are as follows.

$\bullet$ Let $\varphi$ be an elliptic automorphism and $C_{\psi,\varphi}$ is not invertible. Then the spectrum of $C_{\psi,\varphi}$ on $H^2(\mathbb{D})$ is given by Theorem \ref{main11} and Theorem \ref{main12}. The corresponding result on $A_\alpha^2(\mathbb{D})$ is Theorem \ref{main4}.

$\bullet$ Let $\varphi$ be a hyperbolic automorphism and $\psi\in H^\infty(\mathbb{D})$ is continuous at the fixed points of $\varphi$. If $C_{\psi,\varphi}$ is not invertible, then the spectrum of $C_{\psi,\varphi}$ on $H^2(\mathbb{D})$ is given by Theorem \ref{main2}. The corresponding result on $A_\alpha^2(\mathbb{D})$ is Theorem \ref{main5}.

$\bullet$ Let $\varphi$ be a parabolic automorphism and $\psi\in H^\infty(\mathbb{D})$ is continuous at the fixed point of $\varphi$. If $C_{\psi,\varphi}$ is not invertible, then the spectrum of $C_{\psi,\varphi}$ on $H^2(\mathbb{D})$ is given by Theorem \ref{main3}. The corresponding result on $A_\alpha^2(\mathbb{D})$ is Theorem \ref{main6}.

\subsection{Preliminaries}

The spaces we consider in this paper are Hardy space $H^2(\mathbb{D})$ and weighted Bergman spaces $A_\alpha^2(\mathbb{D})$ for $\alpha>-1$. These spaces are all reproducing kernel Hilbert spaces because the point evaluation functionals are bounded on them. That is, for each $z\in\mathbb{D}$, we can find $K_z$ in the space such that
$$\langle f,K_z\rangle=f(z)$$
for all functions $f$ in the space. On $H^2(\mathbb{D})$ we have
$$K_z(w)=\frac{1}{1-\overline{z}w}$$
and $||K_z||=1/\sqrt{1-|z|^2}$. On $A_\alpha^2(\mathbb{D})$ we have
$$K_z(w)=\frac{1}{(1-\overline{z}w)^{\alpha+2}}$$
and $||K_z||=1/(1-|z|^2)^{(\alpha+2)/2}$.

By $H^\infty(\mathbb{D})$ we mean the space of all bounded analytic functions on $\mathbb{D}$, with the norm $||f||_\infty=\sup_{z\in\mathbb{D}}|f(z)|$. The disk algebra $A(\mathbb{D})$ is the subspace of $H^\infty(\mathbb{D})$ consisting all the analytic function which are continuous on $\overline{\mathbb{D}}$.

For any automorphism $\varphi$ of the unit disk $\mathbb{D}$ and $\psi\in H^\infty(\mathbb{D})$, the operators $C_\varphi$ and $M_\psi$ is always bounded on $H^2(\mathbb{D})$ and $A_\alpha^2(\mathbb{D})$. So the weighted composition operator $C_{\psi,\varphi}$ is also bounded, and we can write $C_{\psi,\varphi}$ as $$C_{\psi,\varphi}=M_\psi C_\varphi.$$
Throughout this paper we will always assume that $\psi$ is not identically zero on $\mathbb{D}$. Otherwise $C_{\psi,\varphi}$ will be zero and things would become trivial.

For any automorphism $\varphi$, we will denote the $n$-th iteration of $\varphi$ by $\varphi_n$ in this paper. For $n=0$ we just set $\varphi_0$ identity on $\mathbb{D}$. Then a simple calculation, see \cite{Gun1} for example, can show that
$$C_{\psi,\varphi}^n=C_{\psi_{(n)},\varphi_n}=T_{\psi_{(n)}}C_{\varphi_n},$$
where $\psi_{(n)}=\prod_{k=0}^{n-1}\psi\comp\varphi_k$.

The next proposition is about the adjoint operator of $C_{\psi,\varphi}$. We list it as a lemma here because it is crucial for our discussion in the paper. The proof is straightforward and we omit it here.

\begin{lemma}\label{base}
Suppose $C_{\psi,\varphi}$ is bounded on $X$, where $X$ is either the Hardy space $H^2(\mathbb{D})$ or a weighted Bergman spaces $A_\alpha^2(\mathbb{D})$. Then we have
\begin{equation}
C_{\psi,\varphi}^*K_z=\overline{\psi(z)}K_{\varphi(z)}
\end{equation}
for all $z\in\mathbb{D}$. Here $C_{\psi,\varphi}^*$ represent the adjoint operator of $C_{\psi,\varphi}$.
\end{lemma}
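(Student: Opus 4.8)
The plan is to verify the stated identity by the standard device of testing both sides against an arbitrary function. Since a bounded operator is determined by its action and its adjoint is characterized by $\langle C_{\psi,\varphi}f,g\rangle=\langle f,C_{\psi,\varphi}^*g\rangle$ for all $f,g\in X$, it suffices to show that $\langle f,C_{\psi,\varphi}^*K_z\rangle=\langle f,\overline{\psi(z)}K_{\varphi(z)}\rangle$ for every $f\in X$ and then invoke non-degeneracy of the inner product. Boundedness of $C_{\psi,\varphi}$, which is assumed, guarantees that $C_{\psi,\varphi}^*$ exists as a bounded operator, and the reproducing kernels $K_z$ and $K_{\varphi(z)}$ are genuine elements of $X$ on both spaces, with the explicit formulas recorded above.

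First I would move the adjoint across the inner product and apply the reproducing property at the point $z$, obtaining
$$\langle f,C_{\psi,\varphi}^*K_z\rangle=\langle C_{\psi,\varphi}f,K_z\rangle=(C_{\psi,\varphi}f)(z).$$
By the defining formula $C_{\psi,\varphi}f=\psi\cdot f\comp\varphi$ this last quantity equals $\psi(z)\,f(\varphi(z))$. Since $\varphi$ maps $\mathbb{D}$ into itself, the point $\varphi(z)$ lies in $\mathbb{D}$, so the reproducing property applies there as well and gives $f(\varphi(z))=\langle f,K_{\varphi(z)}\rangle$.

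Combining these and pulling the scalar $\psi(z)$ into the second slot of the inner product yields $\langle f,C_{\psi,\varphi}^*K_z\rangle=\psi(z)\langle f,K_{\varphi(z)}\rangle=\langle f,\overline{\psi(z)}K_{\varphi(z)}\rangle$, and since $f$ is arbitrary the conclusion follows. There is no genuine obstacle in this argument; the only point deserving a word of care is the conjugate on $\psi(z)$, which appears precisely because the Hilbert space inner product is conjugate-linear in its second argument.
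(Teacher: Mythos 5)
Your argument is correct and is exactly the standard computation the paper has in mind when it says the proof is straightforward and omits it: move the adjoint across the inner product, apply the reproducing property at $z$ and at $\varphi(z)$, and account for the conjugate from the second slot. Nothing further is needed.
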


For an operator $T$ on a Hilbert space, we use $\sigma(T)$ to denote the spectrum of $T$, and $r(T)$ to denote the spectral radius of $T$.

\section{Elliptic Automorphisms}

We will begin with the case when $\varphi$ is an elliptic automorphism of the unit disk $\mathbb{D}$. Suppose the fixed point of $\varphi$ is $a\in\mathbb{D}$ and let $\phi$ be the automorphism that exchange $a$ and $0$. Then $\tilde{\varphi}=\phi^{-1}\comp\varphi\comp\phi$ is an automorphism with fixed point $0$, hence a rotation. The spectrum of $C_{\psi,\varphi}$ depends much on the fact whether this rotation is rational or not. The case when $\tilde{\varphi}$ is a rational rotation will be left to the last part of this section since it is quite simple. Now we will focus our attention on the case when $\varphi$ is conjugated to an irrational rotation.

For any $r\in[0,+\infty)$, we will use $\Gamma_r$ to denote the set $\{w\in\mathbb{C}:|w|=r\}$ throughout this section.

\subsection{Spectral Radius}

Firstly, we should find out the spectral radius of $C_{\psi,\varphi}$ on $H^2(\mathbb{D})$. The following result, known as the Birkhoff's Ergodic Theorem, plays a key role in our discussion here.

\begin{lemma}\label{erg1}
Suppose $(X,\mathfrak{F},\mu)$ is a probability space. Let $T$ be a surjective map from $X$ onto itself such that for any $A\in\mathfrak{F}$, we have $T^{-1}A\in\mathfrak{F}$ and $\mu(T^{-1}A)=\mu(A)$. If $T$ is ergodic, then for any $f\in L^1(X,\mu)$ we have
$$\lim_{n\to\infty}\frac{1}{n}\sum_{k=1}^{n}f(T^kx)=\int_Xfd\mu$$
for almost every $x\in X$.
\end{lemma}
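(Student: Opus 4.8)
The plan is to establish this as Birkhoff's pointwise ergodic theorem, following Garsia's streamlined argument built on the \emph{Maximal Ergodic Theorem}. First I would observe that the averages in the statement may be replaced by the more standard ones $A_nf=\frac1n\sum_{k=0}^{n-1}f\circ T^k$: since $\frac1n\sum_{k=1}^{n}f\circ T^k=A_nf+\frac1n\bigl(f\circ T^n-f\bigr)$ and, by measure preservation, $\sum_n\mu(|f\circ T^n|>n\varepsilon)=\sum_n\mu(|f|>n\varepsilon)<\infty$ for each $\varepsilon>0$ because $f\in L^1$, the Borel--Cantelli lemma gives $\frac1n f\circ T^n\to0$ almost everywhere, so the two families of averages share the same a.e.\ limit. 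I then set $\bar f=\limsup_n A_nf$ and $\underline f=\liminf_n A_nf$ and record that both are $T$-invariant, since $A_nf\circ T=\frac{n+1}{n}A_{n+1}f-\frac1n f$.

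The heart of the proof is the maximal ergodic theorem: writing $S_nf=\sum_{k=0}^{n-1}f\circ T^k$ (with $S_0=0$) and $M_nf=\max_{0\le k\le n}S_kf\ge0$, I would claim that $\int_E f\,d\mu\ge0$, where $E=\{x:\sup_{k\ge1}S_kf(x)>0\}$. The key pointwise inequality is that on $\{M_nf>0\}$ one has $f\ge M_nf-M_nf\circ T$; this follows from $f+M_nf\circ T\ge f+S_kf\circ T=S_{k+1}f$ for $0\le k\le n$, hence $f+M_nf\circ T\ge M_nf$ exactly where $M_nf>0$. Integrating over $A_n:=\{M_nf>0\}$ and using $M_nf\equiv0$ off $A_n$ together with measure preservation $\int M_nf\circ T\,d\mu=\int M_nf\,d\mu$ yields $\int_{A_n}f\,d\mu\ge0$; letting $n\to\infty$ and noting $A_n\uparrow E$ gives $\int_E f\,d\mu\ge0$ by dominated convergence.

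With the maximal theorem in hand, a.e.\ convergence follows by a sandwiching argument. For rationals $\beta<\alpha$, the set $E_{\alpha,\beta}=\{\underline f<\beta<\alpha<\bar f\}$ is $T$-invariant, so $T$ restricts to a measure-preserving transformation of it. Applying the maximal theorem to $f-\alpha$ on $E_{\alpha,\beta}$ (where $\sup_n A_n(f-\alpha)\ge\bar f-\alpha>0$) gives $\int_{E_{\alpha,\beta}}f\,d\mu\ge\alpha\,\mu(E_{\alpha,\beta})$, and applying it to $\beta-f$ gives $\int_{E_{\alpha,\beta}}f\,d\mu\le\beta\,\mu(E_{\alpha,\beta})$; since $\alpha>\beta$ this forces $\mu(E_{\alpha,\beta})=0$. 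Taking the union over all rational pairs shows $\bar f=\underline f$ a.e., so $A_nf$ converges a.e.\ to a $T$-invariant limit $g$.

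Finally I would identify $g$. Ergodicity of $T$ makes every invariant function a.e.\ constant, so $g\equiv c$ for some constant $c$. To evaluate $c$ I would upgrade a.e.\ convergence to $L^1$ convergence---by truncating $f$ at a level $N$ and controlling the tail through the $L^1$ maximal inequality that the maximal theorem furnishes, one obtains uniform integrability of $\{A_nf\}$---and then pass to the limit in $\int A_nf\,d\mu=\int f\,d\mu$ (valid for every $n$ by measure preservation) to conclude $c=\int_X f\,d\mu$. I expect the maximal ergodic theorem, and specifically the verification of the pointwise inequality $f\ge M_nf-M_nf\circ T$ and its careful integration, to be the main obstacle; the $L^1$-convergence step needed to pin down the constant $c$ is the second delicate point, while the invariance bookkeeping and the sandwiching are routine.
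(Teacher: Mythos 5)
Your proposal is a correct and complete outline of the standard Garsia-style proof of the Birkhoff pointwise ergodic theorem, but note that the paper does not prove this lemma at all: it is quoted as a known result with a pointer to Appendix 3 of Cornfeld--Fomin--Sinai, so there is no in-paper argument to compare against. Your key steps all check out. The reduction from $\frac1n\sum_{k=1}^n f\circ T^k$ to $A_nf$ via Borel--Cantelli (using $\sum_n\mu(|f|>n\varepsilon)\le\varepsilon^{-1}\|f\|_1$) is sound, as is the $T$-invariance of $\limsup_nA_nf$ and $\liminf_nA_nf$ from the identity $A_nf\circ T=\frac{n+1}{n}A_{n+1}f-\frac1nf$. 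The maximal ergodic theorem is argued correctly: $f+M_nf\circ T\ge S_{k+1}f$ for $0\le k\le n$ gives $f\ge M_nf-M_nf\circ T$ on $\{M_nf>0\}$, and the integration step works because $M_nf$ vanishes off $A_n$ while $M_nf\circ T\ge0$ everywhere, so $\int_{A_n}f\,d\mu\ge\int_XM_nf\,d\mu-\int_XM_nf\circ T\,d\mu=0$. The sandwich over rational pairs $\beta<\alpha$ on the invariant sets $E_{\alpha,\beta}$ correctly forces $\mu(E_{\alpha,\beta})=0$, and ergodicity plus the truncation argument (bounded convergence for the truncated part, the weak-type maximal bound for the tail) identifies the constant limit as $\int_Xf\,d\mu$. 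Only the hypothesis $\mu(T^{-1}A)=\mu(A)$ is used, exactly as stated in the lemma; surjectivity and invertibility of $T$ are never needed. If you were writing this into the paper you would, like the authors, simply cite a standard reference, but as a self-contained verification your argument is the right one and contains no gaps.
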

Here by saying a map $T$ is ergodic we mean that $T^{-1}A=A$ implies $\mu(A)$ is $0$ or $1$ for all $A\in\mathfrak{F}$. For a proof of this theorem one can turn to Appendix 3 of \cite{Erg}.

\begin{remark}
The unit circle $\partial\mathbb{D}$, along with the measure $\frac{d\theta}{2\pi}$, is a probability space. If $\varphi$ is a rotation, then $\varphi$ can be seen as a surjective map from $\partial\mathbb{D}$ onto itself. A simple discussion can show that the rotation $\varphi$ is ergodic on $\partial\mathbb{D}$ if and only if it is an irrational one.
\end{remark}

Suppose $\psi\in A(\mathbb{D})\subset H^2(\mathbb{D})$ is not identically zero, then $\log|\psi|$ is in $L^1(\partial\mathbb{D},d\theta)$, see Theorem 2.7.1 in \cite{GTM237}. So by Birkhoff's Ergodic Theorem, if $\varphi$ is an irrational rotation, then
$$\lim_{n\to\infty}\frac{1}{n}\sum_{k=1}^{n}\log |\psi(\varphi_k(z))|=\frac{1}{2\pi}\int_0^{2\pi}\log |\psi(e^{i\theta})|d\theta$$
for almost every $z\in\partial\mathbb{D}$. Or equivalently we have
$$\lim_{n\to\infty}\left(\prod_{k=1}^{n}|\psi(\varphi_k(z))|\right)^{1/n}=\exp\left(\frac{1}{2\pi}\int_0^{2\pi}\log|\psi(e^{i\theta})|d\theta\right)$$
for almost every $z\in\partial\mathbb{D}$.

Moerover, in \cite{Kam1} Kamowitz proved the following result as Lemma 4.4 there.

\begin{lemma}\label{erg2}
Suppose $\varphi$ is an irrational rotation of $\mathbb{D}$ and $\psi\in A(\mathbb{D})$ is not identically zero, then
$$\limsup_{n\to\infty}\left(\sup_{|z|=1}\prod_{k=1}^{n}|\psi(\varphi_k(z))|\right)^{1/n}=\exp\left(\frac{1}{2\pi}\int_0^{2\pi}\log|\psi(e^{i\theta})|d\theta\right).$$
\end{lemma}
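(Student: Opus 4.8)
The plan is to prove the two inequalities separately. Write $L=\exp\bigl(\frac{1}{2\pi}\int_0^{2\pi}\log|\psi(e^{i\theta})|\,d\theta\bigr)$ and, for brevity, $M_n=\sup_{|z|=1}\prod_{k=1}^{n}|\psi(\varphi_k(z))|$, so that the claim is $\limsup_{n\to\infty}M_n^{1/n}=L$. The lower bound is immediate from the pointwise ergodic statement already derived from Lemma~\ref{erg1}: there exists at least one point $z_0\in\partial\mathbb{D}$ for which $\bigl(\prod_{k=1}^n|\psi(\varphi_k(z_0))|\bigr)^{1/n}\to L$. Since $M_n\geq\prod_{k=1}^n|\psi(\varphi_k(z_0))|$ for every $n$, taking $n$-th roots gives $\liminf_{n\to\infty}M_n^{1/n}\geq L$, hence $\limsup_{n\to\infty}M_n^{1/n}\geq L$.

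For the reverse inequality the difficulty is twofold: the averages must be controlled \emph{uniformly} in $z$, and $\log|\psi|$ may take the value $-\infty$ at the boundary zeros of $\psi$. I would handle the second issue by truncation. For $m\in\mathbb{N}$ set $g_m=\max\{\log|\psi|,-m\}$ on $\partial\mathbb{D}$. Because $|\psi|$ is continuous on $\overline{\mathbb{D}}$, each $g_m$ is a bounded continuous function on $\partial\mathbb{D}$, one has $\log|\psi|\leq g_m$ pointwise, and $g_m\downarrow\log|\psi|$. Since the $L^1$ function $\log|\psi|$ bounds the decreasing sequence from below while $g_1$ bounds it from above, dominated convergence yields $\frac{1}{2\pi}\int_0^{2\pi}g_m\,d\theta\to\frac{1}{2\pi}\int_0^{2\pi}\log|\psi|\,d\theta$ as $m\to\infty$.

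The uniformity is supplied by \emph{unique ergodicity}: an irrational rotation of $\partial\mathbb{D}$ has $\frac{d\theta}{2\pi}$ as its only invariant probability measure, and for uniquely ergodic maps the Birkhoff averages of a \emph{continuous} function converge uniformly to its integral. Applying this to the continuous function $g_m$, for each $\delta>0$ there is an $N$ such that for all $n\geq N$ and all $|z|=1$,
\[
\frac{1}{n}\sum_{k=1}^{n}\log|\psi(\varphi_k(z))|\leq\frac{1}{n}\sum_{k=1}^{n}g_m(\varphi_k(z))\leq\frac{1}{2\pi}\int_0^{2\pi}g_m\,d\theta+\delta,
\]
where the first inequality uses $\log|\psi|\leq g_m$. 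Exponentiating and taking the supremum over $z$ gives $M_n^{1/n}\leq\exp\bigl(\frac{1}{2\pi}\int_0^{2\pi}g_m\,d\theta+\delta\bigr)$ for $n\geq N$, so $\limsup_{n\to\infty}M_n^{1/n}\leq\exp\bigl(\frac{1}{2\pi}\int_0^{2\pi}g_m\,d\theta+\delta\bigr)$. Letting $\delta\to0$ and then $m\to\infty$, and invoking the convergence of the truncated integrals, yields $\limsup_{n\to\infty}M_n^{1/n}\leq L$, which together with the lower bound completes the proof. The main obstacle is exactly the passage from the almost-everywhere pointwise convergence of Lemma~\ref{erg1} to a bound uniform in $z$; unique ergodicity of the irrational rotation is the tool that bridges this gap, and the truncation is what lets one apply it despite the singularities of $\log|\psi|$.
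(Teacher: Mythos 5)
Your proof is correct. Note first that the paper does not actually prove this lemma: it is quoted as Lemma 4.4 of Kamowitz's paper \cite{Kam1}, and the paper's own Remark after Lemma \ref{rad1} records that Kamowitz's route went through Weyl's theorem on uniform distribution mod $1$ (which, as the authors observe, is essentially a special case of the ergodic theorem restricted to Riemann-integrable observables). Your argument is the clean ergodic-theoretic repackaging of that idea: unique ergodicity of the irrational rotation gives exactly the uniform convergence of Birkhoff averages for \emph{continuous} observables that Weyl's theorem would otherwise supply, and the truncation $g_m=\max\{\log|\psi|,-m\}$ is precisely the right device to cope with the integrable $-\infty$ singularities of $\log|\psi|$ at the boundary zeros of $\psi$ --- the only genuine technical obstacle in the statement. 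All the limit interchanges check out: $g_m$ is continuous because $\log|\psi|$ is continuous as a map into $[-\infty,\infty)$, the integrals $\frac{1}{2\pi}\int g_m\,d\theta$ decrease to $\frac{1}{2\pi}\int\log|\psi|\,d\theta$ by monotone (or dominated) convergence since $\log|\psi|\in L^1(\partial\mathbb{D})$, and the one-sided inequality $\log|\psi|\leq g_m$ survives exponentiation even at points where the product vanishes. Your lower bound via a single point of Birkhoff convergence is also fine, and in fact your two bounds together prove the slightly stronger assertion that $M_n^{1/n}$ converges to $L$, not merely that its limsup equals $L$.
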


When $\psi$ is in $H^2(\mathbb{D})$, it can be written as $\psi=\tau\cdot v$, where $\tau$ is an inner function and $v$ is an outer function. This factorization is unique up to constant factors of modulus one. We call $\tau$ the inner part of $\psi$, and $v$ the outer part of $\psi$.

Recall that a function $\tau\in H^2(\mathbb{D})$ is called an inner function if $|\tau(z)|=1$ for almost every $z\in\partial\mathbb{D}$, and $v\in H^2(\mathbb{D})$ is called an outer function if
$$\frac{1}{2\pi}\int_0^{2\pi}\log|v(e^{i\theta})|d\theta=\log|v(0)|.$$
So we have
\begin{align*}
\frac{1}{2\pi}\int_0^{2\pi}\log|\psi(e^{i\theta})|d\theta=&\frac{1}{2\pi}\int_0^{2\pi}\log|\tau(e^{i\theta})|d\theta
\\&+\frac{1}{2\pi}\int_0^{2\pi}\log|v(e^{i\theta})|d\theta=\log|v(0)|.
\end{align*}

Now we can get our result about the spectral radius of $C_{\psi,\varphi}$ on $H^2(\mathbb{D})$.

\begin{lemma}\label{rad1}
Suppose $\varphi$ is an irrational rotation of $\mathbb{D}$ and $\psi\in A(\mathbb{D})$ is not identically zero. Then the spectral radius of $C_{\psi,\varphi}$ on $H^2(\mathbb{D})$ is no larger than $|v(0)|$, where $v$ is the outer part of $\psi$.
\end{lemma}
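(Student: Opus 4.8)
The plan is to bound the spectral radius through Gelfand's formula $r(C_{\psi,\varphi})=\lim_{n\to\infty}\|C_{\psi,\varphi}^n\|^{1/n}$ and then estimate the iterate norms from above using the factorization recorded in the preliminaries. Writing $C_{\psi,\varphi}^n=M_{\psi_{(n)}}C_{\varphi_n}$ with $\psi_{(n)}=\prod_{k=0}^{n-1}\psi\comp\varphi_k$, submultiplicativity gives $\|C_{\psi,\varphi}^n\|\leqslant\|M_{\psi_{(n)}}\|\,\|C_{\varphi_n}\|$. The key point is that each iterate $\varphi_n$ is again a rotation, so $C_{\varphi_n}$ is unitary on $H^2(\mathbb{D})$ and contributes a factor $\|C_{\varphi_n}\|=1$; the entire growth is therefore carried by the multiplier norm $\|M_{\psi_{(n)}}\|$.

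Next I would compute the multiplier norm explicitly. Since $\psi\in A(\mathbb{D})$ and each $\varphi_k$ maps $\overline{\mathbb{D}}$ continuously into itself, the finite product $\psi_{(n)}$ again lies in $A(\mathbb{D})$, so $\|M_{\psi_{(n)}}\|=\|\psi_{(n)}\|_\infty$, and by the maximum modulus principle this supremum is attained on $\partial\mathbb{D}$:
$$\|M_{\psi_{(n)}}\|=\sup_{|z|=1}\prod_{k=0}^{n-1}|\psi(\varphi_k(z))|.$$
Combining the two estimates yields $r(C_{\psi,\varphi})\leqslant\limsup_{n\to\infty}\bigl(\sup_{|z|=1}\prod_{k=0}^{n-1}|\psi(\varphi_k(z))|\bigr)^{1/n}$.

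It remains to evaluate this $\limsup$, and Lemma \ref{erg2} is exactly tailored for it, save that its product runs over $k=1,\dots,n$ rather than $k=0,\dots,n-1$. I would reconcile this by a change of variable: because $\varphi_{-1}$ is a bijection of $\partial\mathbb{D}$, substituting $z=\varphi(u)$ sends $\varphi_k(z)$ to $\varphi_{k+1}(u)$ and hence turns $\prod_{k=0}^{n-1}|\psi(\varphi_k(z))|$ into $\prod_{k=1}^{n}|\psi(\varphi_k(u))|$ without changing the supremum over the circle, so the two suprema coincide for every $n$. Lemma \ref{erg2} then gives that the $\limsup$ equals $\exp\bigl(\frac{1}{2\pi}\int_0^{2\pi}\log|\psi(e^{i\theta})|\,d\theta\bigr)$, which by the inner--outer factorization computed just above Lemma \ref{rad1} is precisely $|v(0)|$.

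The estimate is intrinsically one-sided, which is all that is claimed, so the submultiplicative bound is not wasteful here. The only genuine points to verify are the unitarity of $C_{\varphi_n}$, so that the composition factor drops out of the asymptotics, and the harmless reindexing needed to invoke Lemma \ref{erg2}; since $\psi$ may vanish on $\partial\mathbb{D}$, I would be careful to carry out the reindexing as a genuine change of variable rather than as a ratio of products. Neither point is a real obstacle, and I expect the argument to be short, with all the analytic substance already absorbed into the ergodic estimate of Lemma \ref{erg2}.
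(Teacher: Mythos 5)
Your argument is correct and follows essentially the same route as the paper's proof: factor $C_{\psi,\varphi}^n=T_{\psi_{(n)}}C_{\varphi_n}$, use that rotations give $\|C_{\varphi_n}\|=1$, bound $\|T_{\psi_{(n)}}\|$ by $\|\psi_{(n)}\|_\infty$, and evaluate the resulting $\limsup$ via Lemma \ref{erg2}. Your explicit change of variable reconciling the index ranges $k=0,\dots,n-1$ and $k=1,\dots,n$ is a small detail the paper passes over silently, and you handle it correctly.
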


\begin{proof}
For $n\in\mathbb{N}$ we have
$$C_{\psi,\varphi}^n=C_{\psi_{(n)},\varphi_n}=T_{\psi_{(n)}}C_{\varphi_n},$$
where $\varphi_n$ is the $n$-th iteration of $\varphi_n$ and $\psi_{(n)}=\prod_{k=0}^{n-1}\psi\comp\varphi_n$. Since $\varphi_n$ is a rotation for each $n\in\mathbb{N}$, by Theorem 3.6 in \cite{CM} we have $||C_{\varphi_n}||=1$. So we only need to calculate the norm of $T_{\psi_{(n)}}$.

By Lemma \ref{erg2} and the maximum modulus principle,
\begin{align*}
\limsup_{n\to\infty}||\psi_{(n)}||_\infty^{1/n}&=\limsup_{n\to\infty}\left(\sup_{|z|=1}\prod_{k=0}^{n-1}|\psi(\varphi_k(z))|\right)^{1/n}
\\&=\exp\left(\frac{1}{2\pi}\int_0^{2\pi}\log|\psi(e^{i\theta})|d\theta\right)=|v(0)|.
\end{align*}
So
\begin{align*}
r(C_{\psi,\varphi})&=\lim_{n\to\infty}||C_{\psi,\varphi}^n||^{1/n}
\\&\leqslant\limsup_{n\to\infty}\left(||T_{\psi_{(n)}}||^{1/n}\cdot||C_{\varphi_n}||^{1/n}\right)
\\&=\limsup_{n\to\infty}||\psi_{(n)}||_\infty^{1/n}=|v(0)|.
\end{align*}
\end{proof}

\begin{remark}
An alternative tool for the discussion above is Weyl's result about the uniform distribution mod $1$, which was use in \cite{Kam1} and \cite{Gun1}. However, Weyl's original result requires that the function $f$ in Lemma \ref{erg1} is Riemann integrable, other than Lebesgue integrable. In fact Weyl's theorem can basically be seen as a special case of the Ergodic Theorem.
\end{remark}

\subsection{The Spectrum}

Now we shall continute our discussion with the help of Birkhoff's Ergodic Theorem. Suppose $\psi\in A(\mathbb{D})$ is not identically zero, then $g_r(e^{i\theta})=\log|\psi(re^{i\theta})|$ belongs to $L^1(\partial\mathbb{D},d\theta)$ for each $r\in(0,1]$. Throughout this section, we will let $\Delta_\psi(r)$ denote the exponent of the integral of $g_r$ on $\partial\mathbb{D}$, i.e.,
$$\Delta_\psi(r)=\exp\left(\frac{1}{2\pi}\int_0^{2\pi}\log|\psi(re^{i\theta})|d\theta\right)$$
for $r\in(0,1]$. For $r=0$, we simply set $\Delta_\psi(0)=|\psi(0)|$.
Then again by Lemma \ref{erg1}, when $\varphi$ is an irrational rotation we have
$$\lim_{n\to\infty}\left(\prod_{k=1}^{n}|\psi(\varphi_k(z))|\right)^{1/n}=\Delta_\psi(r)$$
for almost every $z\in\Gamma_r$.

Note that for any $\psi\in A(\mathbb{D})$, $\Delta_\psi(0)=|\psi(0)|$ is always less than or equal to $\Delta_\psi(1)=|v(0)|$ since $v$ is the outer part of $\psi$. In fact, $\log|\psi|$ is subharmonic on $\mathbb{D}$. Then by Jensen's formula and the Littlewood Subordination Theorem, see Lemma 6.1.15 in \cite{Kra} and Theorem 2.22 in \cite{CM}, one can check easily that $\Delta_\psi(r)$ is increasing on $[0,1]$ and continuous on $[0,1)$.

\begin{remark}\label{r1}
It should be pointed out here that we don't have $\Delta_\psi(r)$ continuous on $[0,1]$ in general. We know that
$$\sup_{0<r<1}\int_0^{2\pi}\log|\psi(re^{i\theta})|d\theta\leqslant\int_0^{2\pi}\log|\psi(e^{i\theta})|d\theta,$$
but the equality does not always hold. However, if $\psi$ has no zero on $\partial\mathbb{D}$, then by Lebesgue's dominated convergence theorem we can know that
$\sup_{0<r<1}\Delta_\psi(r)=\Delta_\psi(1)$, hence $\Delta_\psi(r)$ is continuous on $[0,1]$.
\end{remark}

Now we can start off for finding out the spectrum of $C_{\psi,\varphi}$. We first deal with the case when $\psi$ has zeros on $\partial \mathbb{D}$.

\begin{proposition}\label{sp11}
Suppose $\varphi$ is an irrational rotation of $\mathbb{D}$ and $\psi\in A(\mathbb{D})$ has zeros on $\partial \mathbb{D}$. Then the spectrum of $C_{\psi,\varphi}$ on $H^2(\mathbb{D})$ is the disk
$$\{\lambda : |\lambda|\leqslant|v(0)|\},$$
where $v$ is the outer part of $\psi$.
\end{proposition}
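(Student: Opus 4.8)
The plan is to prove the two inclusions separately. For the easy one, Lemma \ref{rad1} gives $r(C_{\psi,\varphi})\le|v(0)|$, so $\sigma(C_{\psi,\varphi})\subseteq\{\lambda:|\lambda|\le|v(0)|\}$; conversely, since each $C_{\varphi_n}$ is unitary on $H^2(\mathbb{D})$ one has $\|C_{\psi,\varphi}^n\|\ge\|T_{\psi_{(n)}}\|=\|\psi_{(n)}\|_\infty$, and Lemma \ref{erg2} with the maximum modulus principle gives $\limsup_n\|\psi_{(n)}\|_\infty^{1/n}=|v(0)|$, so in fact $r(C_{\psi,\varphi})=|v(0)|$. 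Because the spectrum is closed, the statement reduces to showing that every $\lambda$ with $0<|\lambda|<|v(0)|$ lies in $\sigma(C_{\psi,\varphi})$; the point $\lambda=0$ is immediate, since a boundary zero of $\psi$ forces $1/\psi\notin H^\infty(\mathbb{D})$ and hence $C_{\psi,\varphi}$ is non-invertible.

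For the remaining inclusion I would work with the adjoint and build approximate eigenvectors out of reproducing kernels along a $\varphi$-orbit. Fix $\lambda$ with $0<|\lambda|<|v(0)|$ and a base point $z$ on a circle $\Gamma_r$; writing $k_w=K_w/\|K_w\|$, Lemma \ref{base} and $|\varphi(w)|=|w|$ give $C_{\psi,\varphi}^*k_w=\overline{\psi(w)}\,k_{\varphi(w)}$. Seeking $f=\sum_{n=0}^N b_n\,k_{\varphi_n(z)}$ with $C_{\psi,\varphi}^*f\approx\overline\lambda f$ forces the recursion $\overline\lambda\,b_{n+1}=b_n\,\overline{\psi(\varphi_n(z))}$, i.e.\ $b_n=\overline{\psi_{(n)}(z)}/\overline\lambda^{\,n}$ with $b_0=1$, and a direct computation leaves the single defect
\begin{equation*}
C_{\psi,\varphi}^*f-\overline\lambda f=\overline\lambda\,b_{N+1}\,k_{\varphi_{N+1}(z)}-\overline\lambda\,b_0\,k_z,
\end{equation*}
whose end term has coefficient $\overline\lambda\,b_{N+1}=b_N\,\overline{\psi(\varphi_N(z))}$. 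The whole strategy is then to choose the configuration so that both terms are negligible against $\|f\|$.

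The mechanism is to make the coefficients $|b_n|=\prod_{k<n}|\psi(\varphi_k(z))|/|\lambda|$ climb to a tall interior peak and to truncate at an index $N$ for which $\varphi_N(z)$ is close to a boundary zero $\zeta_0$ of $\psi$. The start term is then dominated by the peak, while the end term is small because $\psi(\varphi_N(z))\to\psi(\zeta_0)=0$ by the continuity of $\psi$; this is exactly where the hypothesis that $\psi$ vanishes on $\partial\mathbb{D}$ enters, and it is precisely what distinguishes the full disk here from the circle obtained in the invertible case. For $|\lambda|$ below $\lim_{r\to1}\Delta_\psi(r)$ I would get the peak from Birkhoff's Ergodic Theorem (Lemma \ref{erg1}): for a.e.\ $z\in\Gamma_r$ with $\Delta_\psi(r)>|\lambda|$ one has $|b_n|^{1/n}\to\Delta_\psi(r)/|\lambda|>1$, so the coefficients grow geometrically, and density of the orbit on $\Gamma_r$ lets me place $\varphi_N(z)$ near $\zeta_0$. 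For $\lambda$ in the outer annulus $\lim_{r\to1}\Delta_\psi(r)\le|\lambda|<|v(0)|$ — which is nonempty exactly when $\psi$ carries a nontrivial singular inner factor — no fixed circle works, and I would instead let $r\to1$ and start near the points supplied by Lemma \ref{erg2}, where $\prod_{k<n}|\psi(\varphi_k(z))|$ grows like $|v(0)|^n>|\lambda|^n$ along a finite initial segment, continuing the orbit until it returns near $\zeta_0$.

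I expect the main obstacle to be the lower bound $\|f\|\gtrsim\big(\sum_n|b_n|^2\big)^{1/2}$: the kernels $k_{\varphi_n(z)}$ are not orthogonal, and for an irrational rotation a long orbit on $\Gamma_r$ necessarily contains angularly close points whose normalized kernels are nearly parallel. Keeping the Gram matrix of $\{k_{\varphi_n(z)}\}$ boundedly invertible — by letting $r\to1$ fast enough relative to $N$ that the off-diagonal inner products $\langle k_{\varphi_n(z)},k_{\varphi_m(z)}\rangle$ stay summably small — is the delicate quantitative heart of the argument, and coordinating this estimate with the two competing requirements (a tall peak together with an endpoint near $\zeta_0$) is where the real work lies. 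Once $\|f\|$ is bounded below, the displayed defect divided by $\|f\|$ tends to $0$, so $\overline\lambda$ lies in the spectrum of $C_{\psi,\varphi}^*$ and hence $\lambda\in\sigma(C_{\psi,\varphi})$; closedness of the spectrum then delivers the full closed disk $\{\lambda:|\lambda|\le|v(0)|\}$.
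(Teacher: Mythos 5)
Your overall strategy --- approximate eigenvectors for $C_{\psi,\varphi}^*$ built from reproducing kernels along a rotation orbit, with the boundary zero of $\psi$ killing one defect term and the ergodic theorem controlling the coefficients --- is the right one, and your upper bound via Lemma \ref{rad1} matches the paper. But the proposal has a genuine gap exactly where you locate ``the real work'': the lower bound on $\|f\|$. Your architecture places the zero at the \emph{endpoint} of the orbit, so the surviving base term $-\overline{\lambda}b_0k_z$ has norm $|\lambda|$ and must be beaten by a tall coefficient peak; this forces you to prove $\|f\|\gtrsim\max_n|b_n|$, i.e.\ to control the Gram matrix of $N+1$ kernels. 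That control requires the points $\varphi_n(z)$, $n\leqslant N$, to be pseudohyperbolically separated, hence $1-r$ small compared with the minimal angular gap of the orbit segment, i.e.\ $r$ chosen \emph{after} $N$; but your $N$ comes from Birkhoff's theorem applied to a point of $\Gamma_r$, i.e.\ $N$ is chosen after $r$. You do not resolve this circularity. Moreover your ``fixed circle'' case is literally broken: if $z\in\Gamma_r$ with $r<1$, then $\varphi_N(z)\in\Gamma_r$ can never approach the boundary zero $\zeta_0$, so $|\psi(\varphi_N(z))|$ stays bounded below and the end defect is comparable to the peak, hence to $\|f\|$. Fixing this forces $r\to1$ in all cases and merges with the unresolved coordination problem.

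The paper's proof dissolves both difficulties with two devices you do not use. First, it reverses the roles of the two ends: the base point $z_j=r_j\zeta_j$ (the one carrying coefficient $1$) is taken \emph{near the boundary zero} $\zeta$, so the ``incoming'' defect $\overline{\psi(z_j)}K_{\eta z_j}$ is small by continuity, while the ``outgoing'' defect is small because Birkhoff's theorem applied on $\partial\mathbb{D}$ at points $\zeta_j\to\zeta$, pulled slightly inward using continuity of $\psi$ on $\overline{\mathbb{D}}$, gives $\prod_{k=1}^{n_j}|\psi(\overline{\eta}^kr_j\zeta_j)|>p^{n_j}>|\lambda|^{n_j}$; no peak is needed, and every $|\lambda|<|v(0)|$ (including the annulus created by a singular inner factor, which you treat as a separate case) is handled at once. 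Second, instead of a Gram-matrix estimate it bounds $\|h_j\|$ from below by pairing with the single test function $B_{j,r_j}K_{z_j}/\|K_{z_j}\|$, where $B_{j,r_j}$ is the finite Blaschke product vanishing at the other $n_j$ orbit points; this annihilates every term but the base one and yields $\|h_j\|\geqslant|B_{j,r_j}(z_j)|\cdot\|K_{z_j}\|\geqslant\frac{1}{2}\|K_{z_j}\|$ once $r_j$ is close enough to $1$ for the \emph{fixed} $n_j$ already chosen --- no circularity. If you want to salvage your version, adopt at least the Blaschke-product pairing; better, adopt both devices, at which point you recover the paper's argument.
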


\begin{proof}
By Lemma \ref{rad1}, it is sufficient for us to prove that the set
$$\{\lambda : |\lambda|\leqslant|v(0)|\}$$
is contained in the spectrum of $C_{\psi,\varphi}$.

Since $v$ is an outer function, $v(0)$ can not be zero. Suppose $\varphi(z)=\eta z$ where $|\eta|=1$ and let $\zeta$ be one of the zeros of $\psi$ on $\partial \mathbb{D}$. Then we can find a sequence $\{\zeta_j\}$ on $\partial \mathbb{D}$ such that $\zeta_j$ tends to $\zeta$ as $j\to\infty$ and
\begin{equation}\label{2.1}
\lim_{n\to\infty}\left(\prod_{k=1}^{n}|\psi(\overline{\eta}^k\zeta_j)|\right)^{1/n}=\exp\left(\frac{1}{2\pi}\int_0^{2\pi}\log |\psi(e^{i\theta})|d\theta\right)=|v(0)|.
\end{equation}
for each $j\in \mathbb{N}$.

Now suppose $|\lambda|<|v(0)|$, then we can take $p,q>0$ such that $|\lambda|<p<q<|v(0)|$. By (\ref{2.1}), we can find $n_j\to \infty$ such that
$$\prod_{k=1}^{n_j}|\psi(\overline{\eta}^k\zeta_j)|>q^{n_j}$$
for each $j\in \mathbb{N}$. Then by the continuity of $\psi$ on $\overline{\mathbb{D}}$, we can find $R_j\in (0,1)$ such that
$$\prod_{k=1}^{n_j}|\psi(\overline{\eta}^kr\zeta_j)|>p^{n_j}$$
for all $r\in(R_j,1)$.

Let
$$B_{j,r}(z)=\prod_{k=1}^{n_j}\frac{z-\overline{\eta}^kr\zeta_j}{1-\eta^kr\overline{\zeta_j}z}$$
be the Blaschke product with zeros $\overline{\eta}^kr\zeta_j$, $k=1,2,...,n_j$. Then it is obvious that $\lim_{r\to 1}|B_{j,r}(r\zeta_j)|=1$. So we can find $R_j'\in (0,1)$ such that $|B_{j,r}(r\zeta_j)|>1/2$ for all $r\in(R_j',1)$.

Now take $r_j$ greater than $\max\{R_j,R_j'\}$ such that $\lim_{j\to\infty}r_j=1$ and $\psi$ has no zero on $\Gamma_{r_j}$. Let $z_j=r_j\zeta_j$, then $z_j$ tends to $\zeta$ as $j\to\infty$. So
$$\lim_{j\to\infty}\psi(z_j)=\psi(\zeta)=0.$$

Let
$$h_j=K_{z_j}+\sum_{k=1}^{n_j}\lambda^ku_{j,k}K_{\overline{\eta}^{k}z_j},$$
where
$$u_{j,k}=\left(\prod_{s=1}^{k}\overline{\psi(\overline{\eta}^{s}z_j)}\right)^{-1}.$$
Then by Lemma \ref{base},
\begin{align*}
C_{\psi,\varphi}^*(u_{j,k}K_{\overline{\eta}^{k}z_j})&=\overline{\psi(\overline{\eta}^{k}z_j)}\cdot u_{j,k}K_{\overline{\eta}^{k-1}z_j}
\\&=u_{j,k-1}K_{\overline{\eta}^{k-1}z_j}.
\end{align*}
So we have
$$(C_{\psi,\varphi}^*-\lambda)h_j=\overline{\psi(z_j)}K_{\eta z_j}-\lambda^{n_j+1} u_{j,n_j}K_{\overline{\eta}^{n_j}z_j}.$$
Since
$$|u_{j,n_j}|=\frac{1}{\prod_{k=1}^{{n_j}}|\psi(\overline{\eta}^{k}z_j)|}<\frac{1}{p^{n_j}},$$
we can know that
\begin{align*}
||(C_{\psi,\varphi}^*-\lambda)h_j||&\leqslant||\psi(z_j)K_{\eta z_j}||+||\lambda^{n_j+1} u_{j,n_j}K_{\overline{\eta}^{n_j}z_j}||
\\&\leqslant\left(|\psi(z_j)|+\frac{|\lambda|^{n_j+1}}{p^{n_j}}\right)\frac{1}{\sqrt{1-r_j^2}}.
\end{align*}

However,
\begin{align}\label{last1}
||h_j||&\geqslant\left|\langle h_j,B_{j,r_j}\frac{K_{z_j}}{||K_{z_j}||}\rangle\right|
\\&=\left|\langle K_{z_j},B_{j,r_j}\frac{K_{z_j}}{||K_{z_j}||}\rangle\right|\nonumber
\\&=|B_{j,r_j}(z_j)|\cdot||K_{z_j}||\geqslant\frac{1}{2}\frac{1}{\sqrt{1-r_j^2}}.\nonumber
\end{align}
Therefore
$$\lim_{j\to\infty}\frac{||(C_{\psi,\varphi}^*-\lambda)h_j||}{||h_j||}\leqslant\lim_{j\to\infty}\left(2|\psi(z_j)|+\frac{2|\lambda|^{n_j+1}}{p^{n_j}}\right)=0,$$
which means that $\lambda$ belongs to the approximate spectrum of $C_{\psi,\varphi}^*$. So we have
$$\{\lambda : |\lambda|\leqslant|v(0)|\}\subset\sigma(C_{\psi,\varphi}).$$
\end{proof}

If $\psi$ has no zero on $\partial \mathbb{D}$, then it must vanish somewhere in $\mathbb{D}$. The discussion is divided into two steps, which are Lemma \ref{sp121} and Lemma \ref{sp122} as follows.

\begin{lemma}\label{sp121}
Suppose $\varphi$ is an irrational rotation of $\mathbb{D}$ and $\psi\in A(\mathbb{D})$ has zeros in $\mathbb{D}$. Let $R=\inf\{|z|:\psi(z)=0,z\in\mathbb{D}\}$. If $R>0$, then for each $0<t<\Delta_\psi(R)$, we have $\sigma_{ap}(C_{\psi,\varphi}^*)\cap \Gamma_t\ne\phi$. Here $\sigma_{ap}(C_{\psi,\varphi}^*)$ is the approximate spectrum of $C_{\psi,\varphi}^*$ on $H^2(\mathbb{D})$.
\end{lemma}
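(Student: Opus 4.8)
The plan is to exhibit, for each prescribed modulus $t\in(0,\Delta_\psi(R))$, a value $\lambda$ with $|\lambda|=t$ in the approximate point spectrum of $C_{\psi,\varphi}^*$, by constructing a Weyl sequence out of reproducing kernels along an orbit, in the spirit of Proposition \ref{sp11}. First I would record a reduction: since $\psi$ has no zeros in $\{|z|<R\}$, the function $\log|\psi|$ is harmonic there, so its circular means are constant and $\Delta_\psi(\rho)=|\psi(0)|$ for every $\rho<R$; as $R<1$ and $\Delta_\psi$ is continuous on $[0,1)$, this forces $\Delta_\psi(R)=|\psi(0)|$, so the target moduli are exactly $t\in(0,|\psi(0)|)$. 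Write $\varphi(z)=\eta z$ with $|\eta|=1$. Because $R>0$ and $\psi$ vanishes on (or arbitrarily near) $\Gamma_R$, I would fix a zero $\zeta_0$ of $\psi$ on $\Gamma_R$ and choose points $z_j\to\zeta_0$ with $|z_j|=\rho_j\uparrow R$, taking each $z_j$ inside the full-measure subset of $\Gamma_{\rho_j}$ on which the Birkhoff limit of Lemma \ref{erg1} holds. Then $\psi(z_j)\to\psi(\zeta_0)=0$ while $\rho_j$ stays bounded away from $1$.

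Next, writing $w_k=\overline{\eta}^{\,k}z_j$, I would form
$$h_j=\sum_{k=0}^{n_j}\lambda^k u_{j,k}K_{w_k},\qquad u_{j,k}=\Bigl(\prod_{s=1}^{k}\overline{\psi(w_s)}\Bigr)^{-1},$$
exactly as in Proposition \ref{sp11}. Lemma \ref{base} gives $C_{\psi,\varphi}^*K_{w_k}=\overline{\psi(w_k)}K_{w_{k-1}}$, and the same telescoping computation yields
$$(C_{\psi,\varphi}^*-\lambda)h_j=\overline{\psi(z_j)}K_{\eta z_j}-\lambda^{n_j+1}u_{j,n_j}K_{w_{n_j}}.$$
The two boundary terms are controlled separately. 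The bottom term has norm $|\psi(z_j)|/\sqrt{1-\rho_j^2}$, which tends to $0$ since $\psi(z_j)\to0$ and $\rho_j\to R<1$. For the top term, $|u_{j,n_j}|=(\prod_{s=1}^{n_j}|\psi(w_s)|)^{-1}$; as $t<|\psi(0)|=\Delta_\psi(\rho_j)$, the Birkhoff behaviour of the products $\prod_{s=1}^{n_j}|\psi(w_s)|$ along the chosen orbit lets me pick $n_j\to\infty$ for which this product exceeds $p^{\,n_j}$ with $p\in(t,|\psi(0)|)$, so that $|\lambda|^{n_j+1}|u_{j,n_j}|\le t\,(t/p)^{n_j}\to0$ and the top term also vanishes in norm.

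The crux, and what I expect to be the main obstacle, is a lower bound for $\|h_j\|$ that does not collapse as $n_j\to\infty$. The finite Blaschke device of Proposition \ref{sp11}, which pairs $h_j$ against $B_jK_{z_j}$, succeeds there only because the anchor radius tends to $1$; here the anchor radius must stay near $R<1$ to keep $\psi(z_j)$ small, and for a fixed radius $\rho<1$ the corresponding Blaschke product at $z_j$ decays like $\exp(-c\,n_j)$, rendering that estimate useless (equivalently, the Gram matrix of many kernels on $\Gamma_\rho$ becomes singular). I would instead exploit the geometric decay $|\lambda^k u_{j,k}|\approx(t/|\psi(0)|)^k$, which makes the anchor term $K_{z_j}$ dominate the tail, giving $\|h_j\|\ge\|K_{z_j}\|\bigl(1-\sum_{k\ge1}|\lambda^k u_{j,k}|\bigr)$ once that tail sum is $<1$; and, where this crude bound is insufficient, I would analyse the full Hermitian Gram form $\|h_j\|^2=\sum_{k,l}\overline{\lambda^k u_{j,k}}\,\lambda^l u_{j,l}\,(1-\eta^{\,k-l}\rho_j^2)^{-1}$ directly.

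This last point is precisely why the statement asks only for a nonempty intersection rather than the whole circle: the modulus $t$ is prescribed, but the argument of $\lambda$ is free, and the phases $\lambda^k$ enter both the tail and the Gram form. I would use this freedom in $\arg\lambda$ (and, if convenient, pass to the convergent infinite series $\sum_{k\ge0}\lambda^k u_{j,k}K_{w_k}$, whose residual under $C_{\psi,\varphi}^*-\lambda$ is the single kernel $\overline{\psi(z_j)}K_{\eta z_j}$) to keep the quadratic form bounded below, say $\|h_j\|\ge\delta>0$. Granting such a bound, the two estimates above give $\|(C_{\psi,\varphi}^*-\lambda)h_j\|/\|h_j\|\to0$, so $\lambda\in\sigma_{ap}(C_{\psi,\varphi}^*)$ with $|\lambda|=t$, proving $\sigma_{ap}(C_{\psi,\varphi}^*)\cap\Gamma_t\neq\emptyset$.
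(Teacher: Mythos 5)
Your setup agrees with the paper's: the same telescoping sequence $h_j=\sum_{k}\lambda^k u_{j,k}K_{w_k}$ along the backward orbit, the same treatment of the two residual terms, and the (correct, though inessential) reduction $\Delta_\psi(R)=|\psi(0)|$. You also diagnose exactly the right obstruction: the Blaschke-product lower bound of Proposition \ref{sp11} dies once the anchor radius is pinned near $R<1$, since the relevant finite Blaschke product then decays like $e^{-cn_j}$. But at that crux your proof has a genuine gap: neither of your two substitutes delivers the bound $\|h_j\|\geqslant\delta>0$. The crude bound $\|h_j\|\geqslant\|K_{z_j}\|\bigl(1-\sum_{k\geqslant1}|\lambda^k u_{j,k}|\bigr)$ fails because Birkhoff's theorem controls only $\bigl(\prod_{s=1}^{n}|\psi(w_s)|\bigr)^{1/n}$ asymptotically, not the individual coefficients: the orbit $\{\overline{\eta}^{\,k}z_j\}$ is dense in its circle, which sits within $o(1)$ of the zero of $\psi$ on $\Gamma_R$, so some factors $|\psi(w_s)|$ are arbitrarily small, some $|u_{j,k}|$ are arbitrarily large, and the tail sum is in general far bigger than $1$ (already $t|u_{j,1}|=t/|\psi(w_1)|$ need not be $<1$ when $t$ is close to the geometric mean $\Delta_\psi(R)$). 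Your fallback --- ``analyse the Gram form directly'' and ``use the freedom in $\arg\lambda$'' --- names the right resource but is not an argument; you concede as much by writing ``granting such a bound''.

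The missing device is an averaging/pigeonhole over the argument of $\lambda$. The paper sets $\lambda_{n,m}=e^{2m\pi i/(n+1)}t_0$ for $m=0,\dots,n$, so that $\sum_{m=0}^{n}\lambda_{n,m}^{k}=0$ for $k=1,\dots,n$. Building $h_{j,m}$ with $\lambda=\lambda_{n_j,m}$, all cross terms cancel in the sum over $m$ and $\sum_{m=0}^{n_j}h_{j,m}=(n_j+1)K_{z_j}$; hence some $m_j$ satisfies $\|h_{j,m_j}\|\geqslant\|K_{z_j}\|=(1-R^2)^{-1/2}$. Since the upper bound on $\|(C_{\psi,\varphi}^*-\lambda_{n_j,m_j})h_{j,m_j}\|$ is uniform in $m$ (only $|\lambda_{n_j,m}|=t_0$ enters), one passes to a subsequence with $\lambda_{n_j,m_j}\to\lambda_0\in\Gamma_{t_0}$ and concludes $\lambda_0\in\sigma_{ap}(C_{\psi,\varphi}^*)$. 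This is precisely the rigorous form of ``the argument of $\lambda$ is free,'' and it is why the lemma asserts only that the intersection with $\Gamma_t$ is nonempty. Without this (or an equivalent substitute) your argument does not close. A minor further difference: the paper keeps the points $z_j$ on $\Gamma_R$ itself rather than on circles of radii $\rho_j\uparrow R$; either choice is workable.
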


\begin{proof}
First note that $R>0$ implies $\Delta_\psi(R)>0$.

Suppose $\varphi(z)=\eta z$ where $|\eta|=1$. Let $z_0$ be one of the zeros of $\psi$ on $\Gamma_R$. Then we can find a sequence $\{z_j\}_{j=1}^{\infty}$ on $\Gamma_R$ such that $z_j$ converges to $z_0$ and
$$\lim_{n\to\infty}\left(\prod_{k=1}^{n}|\psi(\overline{\eta}^{k}z_j)|\right)^{1/n}=\Delta_\psi(R)$$
for each $j\in\mathbb{N}$. Now for any $t_0<\Delta_\psi(R)$, take $p>0$ such that $t_0<p<\Delta_\psi(R)$. Then we can find $n_j\to\infty$ such that
$$\prod_{k=1}^{n_j}|\psi(\overline{\eta}^{k}z_j)|>p^{n_j}$$
for each $j\in\mathbb{N}$.

Let $\lambda_{n,m}=e^{\frac{2m\pi i}{n+1}}t_0$. Then it is easy to check out that
\begin{equation}\label{2.2}
\sum_{m=0}^{n}\lambda_{n,m}^k=0
\end{equation}
for $k=1,2,...,n$.

Now let
$$h_{j,m}=K_{z_j}+\sum_{k=1}^{n_j}\lambda_{n_j,m}^ku_{j,k}K_{\overline{\eta}^{k}z_j},$$
where
$$u_{j,k}=\left(\prod_{s=1}^{k}\overline{\psi(\overline{\eta}^{s}z_j)}\right)^{-1}.$$
Then
$$(C_{\psi,\varphi}^*-\lambda_{n_j,m})h_{j,m}=\overline{\psi(z_j)}K_{\eta z_j}-\lambda_{n_j,m}^{n_j+1} u_{j,n_j}K_{\overline{\eta}^{n_j}z_j}.$$
Since
$$|u_{j,n_j}|=\frac{1}{\prod_{k=1}^{{n_j}}|\psi(\overline{\eta}^{k}z_j)|}<\frac{1}{p^{n_j}},$$
we have
\begin{align*}
||(C_{\psi,\varphi}^*-\lambda_{n_j,m})h_{j,m}||&\leqslant||\psi(z_j)K_{\eta z_j}||+||\lambda_{n_j,m}^{n_j+1} u_{j,n_j}K_{\overline{\eta}^{n_j}z_j}||
\\&\leqslant\left(|\psi(z_j)|+\frac{t_0^{n_j+1}}{p^{n_j}}\right)\frac{1}{\sqrt{1-R^2}}.
\end{align*}
However, (\ref{2.2}) shows that
$$\sum_{m=0}^{n_j}h_{j,m}=(n_j+1)K_{z_j},$$
so for each $j$ we can find $m_j$ such that
$$||h_{j,m_j}||\geqslant||K_{z_j}||=\frac{1}{\sqrt{1-R^2}}.$$

Since $\lambda_{n_j,m_j}\in\Gamma_{t_0}$, by passing to a subsequence, we can assume that $\lambda_{n_j,m_j}$ converges to $\lambda_0\in\Gamma_{t_0}$ as $j\to\infty$. Then we have
\begin{align*}
\frac{||(C_{\psi,\varphi}^*-\lambda_0)h_{j,m_j}||}{||h_{j,m_j}||}&\leqslant\frac{||(C_{\psi,\varphi}^*-\lambda_{n_j,m_j})h_{j,m_j}||}{||h_{j,m_j}||}+|\lambda_{n_j,m_j}-\lambda_0| \\&\leqslant|\psi(z_j)|+\frac{t_0^{n_j+1}}{p^{n_j}}+|\lambda_{n_j,m_j}-\lambda_0|.
\end{align*}
Therefore,
$$\lim_{j\to\infty}\frac{||(C_{\psi,\varphi}^*-\lambda_0)h_{j,m_j}||}{||h_{j,m_j}||}\leqslant|\psi(z_0)|=0.$$
This means that $\lambda_0\in\sigma_{ap}(C_{\psi,\varphi}^*)\cap \Gamma_{t_0}$.
\end{proof}

\begin{lemma}\label{sp122}
Suppose $\varphi$ is an irrational rotation of $\mathbb{D}$ and $\psi\in A(\mathbb{D})$. Then for each $r\in (0,1)$, we have $\sigma_{ap}(C_{\psi,\varphi}^*)\cap \Gamma_{\Delta_\psi(r)}\ne\phi$.
\end{lemma}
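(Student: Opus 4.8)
The plan is to run the construction from the proof of Lemma~\ref{sp121}, but aimed at the \emph{exact} radius $t_0:=\Delta_\psi(r)$. At this radius neither of the two boundary error terms decays any longer, so the crude lower bound $\|h\|\ge\|K_z\|$ used there is useless and I must instead force the test vectors to have norm tending to infinity. Write $\varphi(z)=\eta z$ with $|\eta|=1$, and apply Birkhoff's Ergodic Theorem (Lemma~\ref{erg1}) to the mean-zero function $g=\log|\psi|-\log\Delta_\psi(r)\in L^1(\Gamma_r)$: for almost every $z\in\Gamma_r$ one has $\tfrac1n\sum_{k=1}^n\log|\psi(\overline\eta^k z)|\to\log\Delta_\psi(r)$, and after discarding a further countable set we may assume the orbit $w_k:=\overline\eta^k z$ avoids the finitely many zeros of $\psi$ on $\Gamma_r$, so that $u_k=\big(\prod_{s=1}^k\overline{\psi(w_s)}\big)^{-1}$ is well defined. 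Fix such a $z$, put $\lambda_{n,m}=e^{2\pi i m/(n+1)}t_0$, and set
\[
h_{n,m}=K_z+\sum_{k=1}^n\lambda_{n,m}^k u_k K_{w_k}\qquad(n\in\mathbb N,\ 0\le m\le n).
\]

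Exactly as in Lemma~\ref{sp121}, Lemma~\ref{base} telescopes the sum to give
\[
(C_{\psi,\varphi}^*-\lambda_{n,m})h_{n,m}=\overline{\psi(z)}K_{\eta z}-\lambda_{n,m}^{n+1}u_n K_{w_n},
\]
so, writing $a_k:=t_0^{2k}|u_k|^2$, the error norm is bounded \emph{uniformly in} $m$ by $(|\psi(z)|+t_0\,a_n^{1/2})/\sqrt{1-r^2}$. For the denominator the new ingredient is an $L^2$ averaging over $m$ in place of the $L^1$ averaging of Lemma~\ref{sp121}: since $\sum_{m=0}^n e^{2\pi i m(k-l)/(n+1)}=(n+1)\delta_{kl}$ for $|k-l|\le n$ and $\|K_{w_k}\|^2=1/(1-r^2)$, all cross terms cancel and
\[
\sum_{m=0}^n\|h_{n,m}\|^2=\frac{n+1}{1-r^2}\sum_{k=0}^n a_k .
\]
Hence for each $n$ there is an index $m_n$ with $\|h_{n,m_n}\|^2\ge \frac1{1-r^2}S_n$, where $S_n:=\sum_{k=0}^n a_k$, and therefore
\[
\frac{\|(C_{\psi,\varphi}^*-\lambda_{n,m_n})h_{n,m_n}\|}{\|h_{n,m_n}\|}\le\frac{|\psi(z)|+t_0\,a_n^{1/2}}{S_n^{1/2}} .
\]

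Granting that $S_n\to\infty$, I finish as follows. From $\log a_n=-2\sum_{s=1}^n g(w_s)=o(n)$ we get $a_n^{1/n}\to1$; combined with $S_n\to\infty$ this forces $\liminf_n a_n/S_n=0$, for otherwise $a_n\ge c\,S_n$ eventually would give $S_n\ge(1-c)^{-1}S_{n-1}$, i.e. exponential growth of $S_n$ and hence $\limsup_n a_n^{1/n}>1$, a contradiction. Choosing a subsequence $n_j$ with $a_{n_j}/S_{n_j}\to0$, both $|\psi(z)|/S_{n_j}^{1/2}$ and $t_0(a_{n_j}/S_{n_j})^{1/2}$ tend to $0$, so the quotient above tends to $0$. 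Since each $\lambda_{n_j,m_{n_j}}\in\Gamma_{t_0}$, I pass to a convergent subsequence $\lambda_{n_j,m_{n_j}}\to\lambda_0\in\Gamma_{t_0}$ and absorb $|\lambda_{n_j,m_{n_j}}-\lambda_0|$ into the estimate as at the end of Lemma~\ref{sp121}, concluding that $\lambda_0\in\sigma_{ap}(C_{\psi,\varphi}^*)\cap\Gamma_{\Delta_\psi(r)}$, which is what we want.

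The genuine obstacle is the claim $S_n\to\infty$. Writing $a_k=e^{-2b_k}$ with $b_k=\sum_{s=1}^k g(w_s)$ the Birkhoff sums of the mean-zero function $g$, it suffices to exhibit a subsequence along which $a_k$ stays bounded away from $0$, i.e. along which $b_k$ does not drift to $+\infty$. This is precisely the recurrence of the mean-zero cocycle $(b_k)$ under the ergodic rotation: for an ergodic transformation and $g\in L^1$ with $\int g=0$ one has $\liminf_k|b_k(z)|=0$ for almost every $z$, so along a subsequence $b_{k_i}(z)\to0$, whence $a_{k_i}\to1$ and $S_n\to\infty$. I expect this recurrence input — rather than the operator-theoretic construction — to be where the real difficulty lies, since $g$ may carry logarithmic singularities at the zeros of $\psi$ on $\Gamma_r$ and the drift of Birkhoff sums under irrational rotations is delicate; everything else is the bookkeeping of Lemma~\ref{sp121} together with the $L^2$-orthogonality trick.
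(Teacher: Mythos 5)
Your argument is essentially correct, but it takes a genuinely different and substantially heavier route than the paper. The paper does not aim the test vectors at a prescribed $\lambda$ of modulus exactly $t_0=\Delta_\psi(r)$; instead it sets $P_n=\prod_{k=0}^{n-1}\overline{\psi(\eta^k z_0)}$ and lets $\{\lambda_{n,m}\}$ be the $n$-th roots of $P_n$, so that $|\lambda_{n,m}|\to\Delta_\psi(r)$ by Birkhoff while $\lambda_{n,m}^{n}=P_n$ \emph{exactly}. With that choice the telescoped error collapses to $\lambda_{n,m}\bigl(K_{\eta^{n}z_0}-K_{z_0}\bigr)$ --- there is no $\overline{\psi(z_0)}K_{\eta z_0}$ term left over and no tail term to estimate --- and this tends to $0$ along a subsequence $n_j$ with $\eta^{n_j}z_0\to z_0$, which is just the elementary topological recurrence of an irrational rotation. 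The crude bound $\|h_{j,m_j}\|\geqslant\|K_{z_0}\|$ from the root-of-unity averaging then suffices. Your version keeps $|\lambda|=t_0$ fixed, accepts that the error $\overline{\psi(z)}K_{\eta z}-\lambda^{n+1}u_nK_{w_n}$ does not decay, and compensates by forcing $\|h_{n,m_n}\|\to\infty$ via the $L^2$-orthogonality identity $\sum_m\|h_{n,m}\|^2=\frac{n+1}{1-r^2}\sum_k a_k$ (which is correct) plus the claim $S_n=\sum_k a_k\to\infty$. That last claim is the crux, and as you note it does \emph{not} follow from Birkhoff alone: $b_k=o(k)$ is compatible with $b_k\to+\infty$ (e.g.\ like $\sqrt{k}$), which would keep $S_n$ bounded. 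What you need is Atkinson's recurrence theorem for mean-zero $L^1$ cocycles over an ergodic transformation ($\liminf_k|b_k|=0$ a.e.), which is a true but nontrivial result that you assert rather than prove; with that citation supplied, your subsequent $\liminf a_n/S_n=0$ argument and the final diagonal/limit step are sound. So: your proof works, at the cost of importing a deeper ergodic-theoretic input that the paper's choice of $\lambda_{n,m}$ as exact roots of $P_n$ renders unnecessary.
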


\begin{proof}
Suppose $\varphi(z)=\eta z$ where $|\eta|=1$.
Fix $r_0\in (0,1)$, then we can find $z_0\in\Gamma_{r_0}$ such that
\begin{equation}\label{2.3}
\lim_{n\to\infty}\left(\prod_{k=0}^{n-1}|\psi(\eta^kz_0)|\right)^{1/n}=\Delta_\psi(r_0).
\end{equation}
Let
$$P_n=\prod_{k=0}^{n-1}\overline{\psi(\eta^kz_0)}$$
and $\{\lambda_{n,m}\}_{m=1}^n$ be the $n$-th roots of $P_n$. Then by (\ref{2.3}),
$$\lim_{n\to\infty}|\lambda_{n,m}|=\Delta_\psi(r_0)$$
for all $m\in\mathbb{N}$. Moreover we have
\begin{equation}\label{2.6}
\sum_{m=1}^{n}\lambda_{n,m}^{-s}=0
\end{equation}
for $s=1,2,...,n-1$.

Since $\varphi$ is an irrational rotation, we can find $n_j\to\infty$ such that $\eta^{n_j}z_0$ tends to $z_0$. Now let
$$h_{j,m}=K_{z_0}+\sum_{s=1}^{n_j-1}\frac{\prod_{k=0}^{s-1}\overline{\psi(\eta^kz_0)}}{\lambda_{n_j,m}^s}K_{\eta^sz_0}$$
for all $j\in\mathbb{N}$ and $m\leqslant n_j$. Then by (\ref{2.6}) we have $\sum_{m=1}^{n_j}h_{j,m}=n_jK_{z_0}$. So for each $j$ we could find $m_j$ such that
$$||h_{j,m_j}||\geqslant||K_{z_0}||=\frac{1}{\sqrt{1-r_0^2}}.$$
However,
\begin{align*}
(C_{\psi,\varphi}^*-\lambda_{n_j,m_j})h_{j,m_j}&=\lambda_{n_j,m_j}^{1-n_j}P_{n_j}K_{\eta^{n_j}z_0}-\lambda_{n_j,m_j}K_{z_0}
\\&=\lambda_{n_j,m_j}(K_{\eta^{n_j}z_0}-K_{z_0}).
\end{align*}
So
\begin{align*}
\lim_{j\to\infty}\frac{||(C_{\psi,\varphi}^*-\lambda_{n_j,m_j})h_{j,m_j}||}{||h_{j,m_j}||}
&=\sqrt{1-r_0^2}\lim_{j\to\infty}||\lambda_{n_j,m_j}(K_{\eta^{n_j}z_0}-K_{z_0})||
\\&=0.
\end{align*}

Finally, since $\lim_{j\to\infty}|\lambda_{n_j,m_j}|=\Delta_\psi(r_0)$, by passing to a subsequence, we may assume that $\lambda_{n_j,m_j}$ converges to a point $\lambda_0\in\Gamma_{\Delta_\psi(r_0)}$. Then
\begin{align*}
\lim_{j\to\infty}\frac{||(C_{\psi,\varphi}^*-\lambda_0)h_{j,m_j}||}{||h_{j,m_j}||}
&\leqslant\lim_{j\to\infty}\frac{||(C_{\psi,\varphi}^*-\lambda_{n_j,m_j})h_{j,m_j}||}{||h_{j,m_j}||}+\lim_{j\to\infty}|\lambda_{n_j,m_j}-\lambda_0|
\\&=0.
\end{align*}
Thus, $\lambda_0$ belongs to the approximate spectrum of $C_{\psi,\varphi}^*$.
\end{proof}

Combining Lemma \ref{sp121} and Lemma \ref{sp122} with some more discussions, we could get the result as follows.

\begin{proposition}\label{sp12}
Suppose $\varphi$ is an irrational rotation of $\mathbb{D}$ and $\psi\in A(\mathbb{D})$. If $\psi$ has no zero on $\partial\mathbb{D}$ and $\psi(z_0)=0$ for some $z_0\in\mathbb{D}$. Then the spectrum of $C_{\psi,\varphi}$ on $H^2(\mathbb{D})$ is the disk
$$\{\lambda : |\lambda|\leqslant|v(0)|\},$$
where $v$ is the outer part of $\psi$.
\end{proposition}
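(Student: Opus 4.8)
The plan is to prove the two inclusions separately. The inclusion $\sigma(C_{\psi,\varphi})\subseteq\{\lambda:|\lambda|\leqslant|v(0)|\}$ is immediate from Lemma~\ref{rad1}, which bounds the spectral radius by $|v(0)|$. The whole content is therefore the reverse inclusion, and since $\sigma(C_{\psi,\varphi}^*)=\overline{\sigma(C_{\psi,\varphi})}$ while the target disk is invariant under conjugation, it is enough to place the closed disk inside $\sigma(C_{\psi,\varphi}^*)$; I would do this by working with the approximate point spectrum $\sigma_{ap}(C_{\psi,\varphi}^*)\subseteq\sigma(C_{\psi,\varphi}^*)$. The centre $\lambda=0$ costs nothing: as $\psi(z_0)=0$, Lemma~\ref{base} gives $C_{\psi,\varphi}^*K_{z_0}=\overline{\psi(z_0)}K_{\varphi(z_0)}=0$, so $0$ is an eigenvalue of $C_{\psi,\varphi}^*$. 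Because the spectrum is closed, the problem then reduces to showing that for a dense set of radii $t\in(0,|v(0)|)$ the \emph{entire} circle $\Gamma_t$ lies in $\sigma_{ap}(C_{\psi,\varphi}^*)$; taking closures afterwards recovers the missing radii together with the outer circle $\Gamma_{|v(0)|}$.

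The first genuine step is to check that every circle $\Gamma_t$ with $0<t<|v(0)|$ at least \emph{meets} $\sigma_{ap}(C_{\psi,\varphi}^*)$, which is precisely the combined content of Lemmas~\ref{sp121} and \ref{sp122} once the range of $\Delta_\psi$ is identified. Put $R=\inf\{|z|:\psi(z)=0\}\in[0,1)$. On $\{|z|<R\}$ the function $\log|\psi|$ is harmonic, so $\Delta_\psi$ is constant equal to $|\psi(0)|$ on $[0,R]$ and increases thereafter; since $\psi$ has no zeros on $\partial\mathbb{D}$, Remark~\ref{r1} makes $\Delta_\psi$ continuous on all of $[0,1]$ with $\Delta_\psi(1)=|v(0)|$. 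Hence $\{\Delta_\psi(r):r\in(0,1)\}$ is an interval with endpoints $|\psi(0)|$ and $|v(0)|$ (the latter not attained), so Lemma~\ref{sp122} already supplies every radius in $[|\psi(0)|,|v(0)|)$; when $R>0$, Lemma~\ref{sp121} fills the radii in $(0,|\psi(0)|)=(0,\Delta_\psi(R))$, and when $R=0$ one has $|\psi(0)|=0$ and Lemma~\ref{sp122} alone already covers $(0,|v(0)|)$. In every case each circle $\Gamma_t$, $0<t<|v(0)|$, carries at least one point of $\sigma_{ap}(C_{\psi,\varphi}^*)$.

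The heart of the matter, and the step I expect to be the main obstacle, is to upgrade ``$\Gamma_t$ meets the spectrum'' to ``$\Gamma_t$ lies in the spectrum'', that is, to establish that the spectrum is invariant under multiplication by unimodular constants. The obvious device, the unitary $U_\beta f=f(\beta\,\cdot)$, only gives $U_\beta^{-1}C_{\psi,\varphi}U_\beta=C_{\psi_\beta,\varphi}$ with $\psi_\beta(z)=\psi(\beta z)$, hence $\sigma(C_{\psi,\varphi})=\sigma(C_{\psi_\beta,\varphi})$ for all $|\beta|=1$; this says only that the spectrum is insensitive to rotating the symbol, not that it is rotationally invariant as a set, and indeed $U_\beta C_{\psi,\varphi}U_\beta^{-1}=\mu C_{\psi,\varphi}$ would force $\psi$ to be a monomial. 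So the rotational symmetry has to be extracted from the constructions themselves. The promising route is to revisit Lemma~\ref{sp122}: for a recurrence time $N=n_j$ with $\eta^{N}z_0\to z_0$, the admissible eigenvalues $\lambda_{N,m}$ are the $N$-th roots of $P_N=\prod_{k=0}^{N-1}\overline{\psi(\eta^k z_0)}$, so they form $\approx N$ equally spaced points of modulus $|P_N|^{1/N}\to\Delta_\psi(r_0)$ whose arguments become dense on $\Gamma_{\Delta_\psi(r_0)}$, while the error $(C_{\psi,\varphi}^*-\lambda_{N,m})h_{j,m}=\lambda_{N,m}(K_{\eta^{N}z_0}-K_{z_0})$ is small \emph{uniformly in} $m$. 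Thus any prescribed argument on the circle can be targeted by choosing $m=m(N)$ with $\lambda_{N,m(N)}$ near the desired point; the real difficulty is that Lemma~\ref{sp122} secures the lower bound $\|h_{j,m}\|\geqslant\|K_{z_0}\|$ only for the averaged choice, via the fact that $\sum_m h_{j,m}$ is a multiple of $K_{z_0}$, and not for the particular $m(N)$ one wants.

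To overcome this I would borrow the Blaschke-product device of Proposition~\ref{sp11}: testing $h_{j,m(N)}$ against $B\,K_{z_0}/\|K_{z_0}\|$, where $B$ is an inner function vanishing at the orbit points $\eta^{s}z_0$ that appear in $h_{j,m(N)}$ with $s\geqslant1$, annihilates all but the leading term and yields $\|h_{j,m(N)}\|\geqslant|B(z_0)|\,\|K_{z_0}\|$, so the task becomes a quantitative lower bound for $|B(z_0)|$. This is exactly where the non-invertible elliptic case is more delicate than Proposition~\ref{sp11}: there the orbit is pushed out to $\partial\mathbb{D}$ with a fixed number of factors, making each Blaschke factor of modulus near one, whereas here the orbit sits on the fixed interior circle $\Gamma_{r_0}$ with a growing number of factors, so the lower bound on $|B(z_0)|$ (equivalently, a lower bound on $\|h_{j,m(N)}\|$ for a prescribed argument) is the genuine technical heart of the argument and is what I expect to require the most care. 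Once a dense set of arguments is shown to lie in $\sigma_{ap}(C_{\psi,\varphi}^*)$ on each circle $\Gamma_{\Delta_\psi(r)}$, closedness of the spectrum promotes this to the full circles, fills the remaining radii and the outer circle, and, using that the disk is conjugation-invariant, delivers $\sigma(C_{\psi,\varphi})=\{\lambda:|\lambda|\leqslant|v(0)|\}$.
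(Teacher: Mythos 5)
Your reduction of the problem is the same as the paper's: the upper bound is Lemma~\ref{rad1}, the point $0$ is an eigenvalue of $C_{\psi,\varphi}^*$, the identification of the range of $\Delta_\psi$ via harmonicity of $\log|\psi|$ on $\{|z|<R\}$ and Remark~\ref{r1} is exactly what the paper does, and Lemmas~\ref{sp121} and~\ref{sp122} correctly give you one point of $\sigma_{ap}(C_{\psi,\varphi}^*)$ on each circle $\Gamma_t$, $0<t<|v(0)|$. You have also correctly diagnosed that the whole difficulty is promoting ``$\Gamma_t$ meets $\sigma_{ap}$'' to ``$\Gamma_t\subset\sigma_{ap}$'', and correctly rejected the naive conjugation by $U_\beta f=f(\beta\cdot)$. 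But the route you then propose does not close this gap, and as described it would fail. The points $\eta^s z_0$, $s=1,\dots,N-1$, sit on the fixed interior circle $\Gamma_{r_0}$ and equidistribute there; they do not satisfy the Blaschke condition (each term $1-|\eta^s z_0|=1-r_0$ is constant), and the finite Blaschke product $B$ you want to test against has $|B(z_0)|=\prod_{s=1}^{N-1}d(z_0,\eta^s z_0)$, which tends to $0$ (roughly like $N!/N^N$, since the closest orbit points are at pseudo-hyperbolic distance of order $k/N$). So the lower bound $\|h_{j,m(N)}\|\geqslant |B(z_0)|\,\|K_{z_0}\|$ degenerates precisely in the regime $N\to\infty$ that you need, and cannot be compared with the error term $|\lambda_{N,m}|\,\|K_{\eta^N z_0}-K_{z_0}\|$, whose decay rate is governed by the Diophantine properties of the rotation and is not at your disposal. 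You acknowledge this is ``the genuine technical heart'', but that is exactly the step a proof must supply.

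The paper's device for this step is different and you are missing it: one exploits the exact intertwining relation $C_{\psi,\varphi}T_z=\eta\, T_zC_{\psi,\varphi}$ (where $T_z$ is multiplication by $z$), which on adjoints gives $(C_{\psi,\varphi}^*-\eta\lambda_0)T_z^*f=\eta\,T_z^*(C_{\psi,\varphi}^*-\lambda_0)f$. Hence if $f_n$ is an approximate eigenvector sequence for $\lambda_0$, then $g_n=T_z^*f_n$ is one for $\eta\lambda_0$, \emph{provided} $\|g_n\|$ stays bounded below; since $\|T_z^*f_n\|^2=\|f_n\|^2-|\langle f_n,1\rangle|^2$ on $H^2(\mathbb{D})$, this reduces to showing $|\langle f_n,1\rangle|$ is bounded away from $1$, which follows because $\|(C_{\psi,\varphi}^*-\lambda_0)(1)\|=|\lambda_0-\overline{\psi(0)}|>0$ for every $\lambda_0$ produced by Lemmas~\ref{sp121} and~\ref{sp122} (one has $|\lambda_0|<\Delta_\psi(R)=|\psi(0)|$ in the first case and $|\lambda_0|=\Delta_\psi(r_0)>|\psi(0)|$ in the second, by Jensen's formula). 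Iterating gives $\eta^k\lambda_0\in\sigma_{ap}(C_{\psi,\varphi}^*)$ for all $k$, and density of $\{\eta^k\}$ on the unit circle, plus closedness of the spectrum, yields the full disk. This single algebraic identity replaces the quantitative Blaschke estimate you were hoping for; without it (or an equivalent substitute) your argument is incomplete.
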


\begin{proof}
Let $R=|z_0|$, then $R\in[0,1)$. If $R$ is greater than zero, without loss of generality we can assume that $\psi$ has no zero in $\{z\in\mathbb{D}:|z|<R\}$.

Since $\psi$ has no zero on $\partial\mathbb{D}$, Remark \ref{r1} shows that
$$|v(0)|=\Delta_\psi(1)=\sup_{0<r<1}\Delta_\psi(r).$$
So
\begin{align*}
\{\lambda : |\lambda|\leqslant|v(0)|\}&=\{\lambda:|\lambda|\leqslant\Delta_\psi(R)\}\cup\left(\bigcup_{R<r\leqslant 1}\Gamma_{\Delta_\psi(r)}\right)
\\&=\overline{A\cup B},
\end{align*}
where
$A=\bigcup_{0<t<\Delta_\psi(R)}\Gamma_t$
and
$B=\bigcup_{R<r<1}\Gamma_{\Delta_\psi(r)}$. If $\Delta_\psi(R)=0$, we just set $A=\phi$. Note that this can only happen when $R=0$. By Lemma \ref{sp121} we have
\begin{equation}\label{2.4}
\sigma_{ap}(C_{\psi,\varphi}^*)\cap \Gamma_t\ne\phi
\end{equation}
for all $0<t<\Delta_\psi(R)$. And by Lemma \ref{sp122} we have
\begin{equation}\label{2.5}
\sigma_{ap}(C_{\psi,\varphi}^*)\cap \Gamma_{\Delta_\psi(r)}\ne\phi
\end{equation}
for all $r\in(R,1)$.
Now we will show that both $A$ and $B$ are contained in the spectrum of $C_{\psi,\varphi}$.

For any fixed $\lambda_0\in\sigma_{ap}(C_{\psi,\varphi}^*)\cap(A\cup B)$, we can find a sequence $f_n$ in $H^2(\mathbb{D})$ such that $||f_n||=1$ and $||(C_{\psi,\varphi}^*-\lambda_0)f_n||$ converges to zero. Since
$$C_{\psi,\varphi}^*(1)=C_{\psi,\varphi}^*K_0=\overline{\psi(0)},$$
we have that $||(C_{\psi,\varphi}^*-\lambda_0)(1)||=|\lambda_0-\overline{\psi(0)|}$. Suppose $\lambda_0\in A$, then $|\lambda_0|<\Delta_\psi(R)$. Since $\psi$ has no zero in $\{z\in\mathbb{D}:|z|<R\}$, $\log|\psi|$ is harmonic in $\{z\in\mathbb{D}:|z|< R\}$. So
$$\int_{0}^{2\pi}\log|\psi(Re^{i\theta})|d\theta=\sup_{0<r<R}\int_{0}^{2\pi}\log|\psi(re^{i\theta})|d\theta=\log|\psi(0)|,$$
which means that $\Delta_\psi(R)=\Delta_\psi(0)=|\psi(0)|$. Therefore,
\begin{align*}
||(C_{\psi,\varphi}^*-\lambda_0)(1)||&\geqslant|\psi(0)|-|\lambda_0|
\\&=\Delta_\psi(R)-|\lambda_0|
\\&>0.
\end{align*}
On the other hand, if $\lambda_0\in B$, then $|\lambda_0|=\Delta_\psi(r_0)$  for some $r_0\in(R,1)$, then
\begin{align*}
||(C_{\psi,\varphi}^*-\lambda_0)(1)||&\geqslant|\lambda_0|-|\psi(0)|
\\&=\Delta_\psi(r_0)-|\psi(0)|
\\&>0.
\end{align*}
The last equality follows from Jensen's formula and the fact that $\psi$ has zeros in $\{z:|z|<r_0\}$. In a word, we always have $||(C_{\psi,\varphi}^*-\lambda_0)(1)||$ greater than zero.

Now we can assert that there exist $\delta\in (0,1)$ such that $|\langle f_n,1\rangle|<\delta$ for all $n\in\mathbb{N}$. Otherwise, we can find $|\xi|=1$ and a subsequence of $\{f_n\}$, say $\{f_{n_j}\}$, such that $\langle f_{n_j},\xi\rangle$ tends to $1$. This means that $||f_{n_j}-\xi||$ converges to zero, hence $||(C_{\psi,\varphi}^*-\lambda_0)f_{n_j}||$ converges to $||(C_{\psi,\varphi}^*-\lambda_0)(\xi)||>0$, which is a contradiction.

Suppose $\varphi(z)=\eta z$ where $|\eta|=1$. Let $g_n=T^*_zf_n=T_{\overline{z}}f_n$, then
\begin{equation}\label{last2}
||g_n||^2=||f_n||^2-|\langle f_n,1\rangle|^2>1-\delta^2.
\end{equation}
However, since $C_{\psi,\varphi}T_z=\eta T_zC_{\psi,\varphi}$, we have that
\begin{align*}
||(C_{\psi,\varphi}^*-\eta\lambda_0)g_n||&=||\eta T^*_zC_{\psi,\varphi}^*f_n-\eta\lambda_0T^*_zf_n||
\\&\leqslant||T^*_z||\cdot||(C_{\psi,\varphi}^*-\lambda_0)f_n||
\\&=||(C_{\psi,\varphi}^*-\lambda_0)f_n||.
\end{align*}
So we have $||(C_{\psi,\varphi}^*-\eta\lambda_0)g_n||$ converges to zero, which means that $\eta\lambda_0$ belongs to the approximate spectrum of $C_{\psi,\varphi}^*$. Hence the approximate spectrum of $C_{\psi,\varphi}^*$ contains the points $\eta^k\lambda_0$ for all $k\in\mathbb{N}$. As a result, (\ref{2.4}) implies that $\Gamma_t$ is contained in $\sigma_{ap}(C_{\psi,\varphi}^*)$ for all $0<t<\Delta_\psi(R)$ and (\ref{2.5}) implies that $\Gamma_{\Delta_\psi(r)}$ is contained in $\sigma_{ap}(C_{\psi,\varphi}^*)$ for all for all $r\in(R,1)$, since $\varphi$ is an irrational rotation. So we have
$$\{\lambda : |\lambda|\leqslant|v(0)|\}=\overline{A\cup B}\subset\sigma(C_{\psi,\varphi}^*).$$

Finally, Lemma \ref{rad1} shows that
$$\sigma(C_{\psi,\varphi}^*)\subset\{\lambda : |\lambda|\leqslant|v(0)|\}.$$
Therefore
$$\sigma(C_{\psi,\varphi})=\sigma(C_{\psi,\varphi}^*)=\{\lambda : |\lambda|\leqslant|v(0)|\}.$$
\end{proof}

Now we can get our final result here as follows.

\begin{theorem}\label{main11}
Suppose $\varphi$ is an elliptic automorphism of $\mathbb{D}$ with fixed point $a\in\mathbb{D}$ and $\psi\in A(\mathbb{D})$. Assume that $C_{\psi,\varphi}$ is not invertible. If $\varphi'(a)^k\ne 1$ for all $k\in\mathbb{N}$, then the spectrum of $C_{\psi,\varphi}$ on $H^2(\mathbb{D})$ is the disk
$$\{\lambda : |\lambda|\leqslant|v(a)|\},$$
where $v$ is the outer part of $\psi$.
\end{theorem}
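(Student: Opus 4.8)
The plan is to reduce the elliptic case to the irrational rotation case already settled in Propositions \ref{sp11} and \ref{sp12}, by conjugating $C_{\psi,\varphi}$ with the composition operator induced by the involution $\phi$ that exchanges $a$ and $0$. Since $\phi$ is an automorphism of $\mathbb{D}$, the operator $C_\phi$ is bounded and invertible on $H^2(\mathbb{D})$, and because $\phi\circ\phi=\mathrm{id}$ we have $C_\phi^{-1}=C_\phi$. A direct computation of $C_\phi C_{\psi,\varphi}C_\phi$ applied to an arbitrary $f$ gives
$$C_\phi C_{\psi,\varphi}C_\phi = C_{\psi\circ\phi,\,\tilde\varphi}, \qquad \tilde\varphi=\phi\circ\varphi\circ\phi,$$
so $C_{\psi,\varphi}$ is similar to the weighted composition operator $C_{\tilde\psi,\tilde\varphi}$ with symbol $\tilde\psi=\psi\circ\phi$. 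As similar operators have the same spectrum, it suffices to compute $\sigma(C_{\tilde\psi,\tilde\varphi})$.

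Next I would check that $C_{\tilde\psi,\tilde\varphi}$ meets the hypotheses of Section 2. Since $\tilde\varphi$ fixes $0$ and its multiplier there equals $\varphi'(a)$ (the multiplier at a fixed point is preserved under conjugation, as one sees by differentiating $\phi\circ\phi=\mathrm{id}$), the assumption $\varphi'(a)^k\ne 1$ for all $k$ makes $\tilde\varphi$ an irrational rotation, while $\tilde\psi=\psi\circ\phi\in A(\mathbb{D})$ is not identically zero. Because $\varphi$ is an automorphism we may write $C_{\psi,\varphi}=M_\psi C_\varphi$ with $C_\varphi$ invertible, so non-invertibility of $C_{\psi,\varphi}$ forces $M_\psi$, hence $1/\psi$, to be unbounded; for $\psi\in A(\mathbb{D})$ this means $\psi$ vanishes at some point of $\overline{\mathbb{D}}$. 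As $\phi$ maps $\overline{\mathbb{D}}$ bijectively onto itself and $\partial\mathbb{D}$ onto $\partial\mathbb{D}$, the symbol $\tilde\psi$ vanishes on $\partial\mathbb{D}$ exactly when $\psi$ does, and otherwise vanishes only in the interior. In the first case Proposition \ref{sp11} applies, in the second Proposition \ref{sp12} applies, and in either case
$$\sigma(C_{\tilde\psi,\tilde\varphi})=\{\lambda:|\lambda|\leqslant|\tilde v(0)|\},$$
where $\tilde v$ is the outer part of $\tilde\psi$.

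It remains to identify $|\tilde v(0)|$ with $|v(a)|$, which I expect to be the crux of the argument. Since $v$ is outer it is zero-free, so $\log|v\circ\phi|$ is harmonic on $\mathbb{D}$, and the mean value property at the center, using $\phi(0)=a$, gives
$$\log|v(a)|=\frac{1}{2\pi}\int_0^{2\pi}\log|v(\phi(e^{i\theta}))|\,d\theta.$$
As $\phi$ carries $\partial\mathbb{D}$ to $\partial\mathbb{D}$ up to a null set and $|v|=|\psi|$ a.e.\ on $\partial\mathbb{D}$, the right-hand side equals $\tfrac{1}{2\pi}\int_0^{2\pi}\log|\psi(\phi(e^{i\theta}))|\,d\theta=\tfrac{1}{2\pi}\int_0^{2\pi}\log|\tilde\psi(e^{i\theta})|\,d\theta=\log|\tilde v(0)|$, the last step using that $\tilde v$ is the outer part of $\tilde\psi$. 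Hence $|\tilde v(0)|=|v(a)|$, and combining the three displays yields $\sigma(C_{\psi,\varphi})=\sigma(C_{\tilde\psi,\tilde\varphi})=\{\lambda:|\lambda|\leqslant|v(a)|\}$, as claimed.

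The main obstacle is exactly this last identity: one must recognize that moving from $0$ to the fixed point $a$ replaces ordinary arc-length averaging by harmonic-measure (Poisson) averaging, and that this is precisely compensated by the conformal change of variables $\phi$. The conjugation computation and the case split are routine once the invertibility criterion is invoked, but some care is needed to confirm that $\tilde\psi$ genuinely satisfies the zero-set hypotheses of Propositions \ref{sp11} and \ref{sp12}.
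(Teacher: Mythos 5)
Your proposal follows the paper's own proof essentially verbatim: conjugate by $C_\phi$ (with $\phi$ the involution exchanging $a$ and $0$) to reduce to an irrational rotation fixing the origin, invoke Propositions \ref{sp11} and \ref{sp12} for $C_{\tilde\psi,\tilde\varphi}$, and identify the radius via the fact that $v\circ\phi$ is the outer part of $\tilde\psi=\psi\circ\phi$, so that $|\tilde v(0)|=|v(a)|$. The details you add --- deducing from the invertibility criterion that $\psi$ must vanish somewhere on $\overline{\mathbb{D}}$ so that exactly one of the two propositions applies, and the harmonic-measure/change-of-variables computation showing outerness is preserved under composition with $\phi$ --- are precisely the steps the paper leaves implicit, and they are correct.
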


\begin{proof}
If $a=0$, then $\varphi$ is an irrational rotation. The result follows directly from Proposition \ref{sp11} and Proposition \ref{sp12}.

For the general case, let $\phi=\phi_a$ be the automorphism that exchanges $a$ and $0$. Then $\tilde{\varphi}(z)=\phi^{-1}\comp\varphi\comp\phi(z)=\varphi'(a)z$, hence $\tilde{\varphi}$ is an irrational rotation. A simple calculation shows that
\begin{align*}
C_\phi C_{\psi,\varphi}C_\phi^{-1}&=C_\phi C_{\psi,\varphi}C_{\phi^{-1}}
\\&=C_{\tilde{\psi},\tilde{\varphi}},
\end{align*}
where $\tilde{\psi}=\psi\comp\phi$. So $C_{\psi,\varphi}$ is similar to $C_{\tilde{\psi},\tilde{\varphi}}$, which means that they have the same spectrum. Since $v$ is the outer part of $\psi$, it is not hard to see that $v\comp\phi$ is the outer part of $\tilde{\psi}$. So again by Proposition \ref{sp11} and Proposition \ref{sp12}, we have
\begin{align*}
\sigma(C_{\psi,\varphi})&=\sigma(C_{\tilde{\psi},\tilde{\varphi}})
\\&=\{\lambda : |\lambda|\leqslant|v\comp\phi(0)|\}
\\&=\{\lambda : |\lambda|\leqslant|v(a)|\}.
\end{align*}
\end{proof}

\subsection{Rational Rotation}

When $\varphi$ is conjugated to a rational rotation, things become much easier. In fact, it has been solved by Gunatillake in \cite{Gun1}, see his remark after Theorem 3.1.1 there. However, his proof just shows that for each interior point $\lambda$ in $\sigma(C_{\psi,\varphi})$, $C_{\psi,\varphi}-\lambda$ is not surjective. We will deal this case in a new way here, and then we can get a stronger result that the range of $C_{\psi,\varphi}-\lambda$ is not even dense.

\begin{theorem}\label{main12}
Suppose $\varphi$ is an elliptic automorphism of $\mathbb{D}$ with fixed point $a\in\mathbb{D}$ and $\psi\in H^\infty(\mathbb{D})$. Assume that $C_{\psi,\varphi}$ is not invertible. If $\varphi'(a)^n=1$ for some integer $n\in\mathbb{N}$ and $n_0$ is the smallest such integer, then the spectrum of $C_{\psi,\varphi}$ on $H^2(\mathbb{D})$ is the closure of the set
$$\Lambda=\{\lambda : \lambda^{n_0}=\prod_{k=0}^{n_0-1}\psi(\varphi_k(z)),z\in\mathbb{D}\}.$$
Moreover, each point in $\Lambda$ belongs to the compression spectrum of $C_{\psi,\varphi}$.
\end{theorem}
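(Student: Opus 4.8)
The plan is to exploit the fact that an elliptic automorphism conjugate to a rational rotation is periodic. First I would reduce to the model situation exactly as in the proof of Theorem~\ref{main11}: conjugating by the involution $\phi=\phi_a$ exchanging $a$ and $0$ turns $C_{\psi,\varphi}$ into $C_{\tilde\psi,\tilde\varphi}$ with $\tilde\varphi(z)=\varphi'(a)z$ and $\tilde\psi=\psi\circ\phi$, a similarity that preserves both the spectrum and the compression spectrum and under which the defining set $\Lambda$ is unchanged. Hence I may assume $\varphi(z)=\omega z$ with $\omega=\varphi'(a)$ a primitive $n_0$-th root of unity. The decisive structural point is that $\varphi_{n_0}=\mathrm{id}$, so that
\[
C_{\psi,\varphi}^{n_0}=T_{\psi_{(n_0)}}C_{\varphi_{n_0}}=M_{\psi_{(n_0)}},\qquad \psi_{(n_0)}=\prod_{k=0}^{n_0-1}\psi\circ\varphi_k .
\]
The spectrum of the analytic Toeplitz operator $M_{\psi_{(n_0)}}$ on $H^2(\mathbb{D})$ is $\overline{\psi_{(n_0)}(\mathbb{D})}$, so the spectral mapping theorem applied to $p(x)=x^{n_0}$ gives $\{\mu^{n_0}:\mu\in\sigma(C_{\psi,\varphi})\}=\overline{\psi_{(n_0)}(\mathbb{D})}$. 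Since $z\mapsto z^{n_0}$ is continuous, open and proper, $\overline{\Lambda}$ is exactly the set of $n_0$-th roots of points of $\overline{\psi_{(n_0)}(\mathbb{D})}$, which already yields $\sigma(C_{\psi,\varphi})\subseteq\overline{\Lambda}$.

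For the reverse inclusion it is enough, since $\sigma(C_{\psi,\varphi})$ is closed, to prove the stronger ``moreover'' statement that every $\lambda\in\Lambda$ lies in the compression spectrum, i.e. that $\overline\lambda$ is an eigenvalue of $C_{\psi,\varphi}^*$. Because $\Lambda$ is invariant under multiplication by $n_0$-th roots of unity, it suffices to produce, for each $w_0=\lambda^{n_0}\in\psi_{(n_0)}(\mathbb{D})$, eigenvectors of $C_{\psi,\varphi}^*$ at all $n_0$ roots of $\overline{w_0}$ at once. The natural vehicle is Lemma~\ref{base}: if $w_0=\psi_{(n_0)}(z_0)$ with $z_0\neq0$, then $z_0,\varphi(z_0),\dots,\varphi_{n_0-1}(z_0)$ are $n_0$ distinct points, the span of the kernels $K_{\varphi_k(z_0)}$ is $C_{\psi,\varphi}^*$-invariant, and on it $C_{\psi,\varphi}^*$ is a weighted cyclic shift with weight-product $\overline{\psi_{(n_0)}(z_0)}=\overline{w_0}$; when $w_0\neq0$ this finite shift has all $n_0$ roots of $\overline{w_0}$ as (simple) eigenvalues. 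The case $w_0=0$, that is $\lambda=0$, is immediate, for $0\in\Lambda$ forces $\psi$ to vanish at some $z_*\in\mathbb{D}$ and then $C_{\psi,\varphi}^*K_{z_*}=0$.

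The one value not reached this way is $w_0=\psi(0)^{n_0}$ when it is attained only at the fixed point $z_0=0$; there the orbit degenerates to a point and $K_0$ yields only the eigenvalue $\overline{\psi(0)}$. This is the main obstacle, and I would clear it by replacing the degenerate kernel orbit with an ``infinitesimal'' one. From $C_{\psi,\varphi}M_z=\omega M_zC_{\psi,\varphi}$ one gets that $z^{n_0}H^2$ is invariant for $C_{\psi,\varphi}$, so the finite-dimensional model space $K=\ker (M_z^*)^{n_0}=\mathrm{span}\{1,z,\dots,z^{n_0-1}\}$ is invariant for $C_{\psi,\varphi}^*$. On $K$ I would check two facts by direct computation: since $\psi_{(n_0)}$ is a function of $z^{n_0}$, one has $(C_{\psi,\varphi}^*)^{n_0}|_K=\overline{\psi(0)^{n_0}}\,I$; and, writing $A=C_{\psi,\varphi}^*|_K$, $\mathcal N=M_z^*|_K$, the operator $\mathcal N$ is a single nilpotent Jordan block of size $n_0$ while the commutation relation gives $A\mathcal N=\omega\mathcal N A$. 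It then remains a short linear-algebra fact: an operator $A$ on an $n_0$-dimensional space with $A^{n_0}=cI$, $c\neq0$, that $\omega$-commutes with a full nilpotent Jordan block has all $n_0$ distinct $n_0$-th roots of $c$ as eigenvalues. Indeed, since $A$ is diagonalizable and $\mathcal N^{n_0-1}\neq0$ one may choose an eigenvector $v$ of $A$ with $\mathcal N^{n_0-1}v\neq0$, and then $v,\mathcal Nv,\dots,\mathcal N^{n_0-1}v$ are eigenvectors of $A$ with the distinct eigenvalues $\mu,\omega\mu,\dots,\omega^{n_0-1}\mu$. Taking $c=\overline{\psi(0)^{n_0}}$ places every root of $\overline{w_0}$ in the point spectrum of $C_{\psi,\varphi}^*$, settling the degenerate case.

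Assembling the pieces gives $\Lambda\subseteq(\text{compression spectrum})\subseteq\sigma(C_{\psi,\varphi})$, hence $\overline{\Lambda}\subseteq\sigma(C_{\psi,\varphi})$, and with the first paragraph $\sigma(C_{\psi,\varphi})=\overline{\Lambda}$. I expect the routine inputs to be the Toeplitz spectrum and the two verifications on $K$; the genuinely delicate point, and the reason Gunatillake's argument gave only non-surjectivity, is exactly the degenerate value $\psi(0)^{n_0}$, where one must build eigenvectors of the adjoint that are not single reproducing kernels. (The non-invertibility hypothesis plays no essential role in this argument; the same proof covers the invertible case.)
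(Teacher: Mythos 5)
Your proposal is correct, and its skeleton coincides with the paper's: the inclusion $\sigma(C_{\psi,\varphi})\subseteq\overline{\Lambda}$ via $C_{\psi,\varphi}^{n_0}=T_{\psi_{(n_0)}}$ and the spectral mapping theorem, and the reverse inclusion by exhibiting $\overline{\lambda}$ as an eigenvalue of $C_{\psi,\varphi}^*$ on the span of the reproducing kernels along the orbit of a point $z_0$ with $\lambda^{n_0}=\psi_{(n_0)}(z_0)$ (your ``weighted cyclic shift'' is exactly the paper's vector $h$). Where you genuinely go beyond the paper is the degenerate case in which the value $\lambda^{n_0}$ is attained by $\psi_{(n_0)}$ only at the fixed point $z_0=a$: there the orbit collapses, $h$ becomes the scalar $\sum_{k=0}^{n_0-1}(\overline{\psi(a)}/\overline{\lambda})^k$ times $K_a$, and this scalar vanishes for every root $\lambda=\zeta\psi(a)$ with $\zeta\neq1$ (e.g.\ $a=0$, $n_0=2$, $\psi(z)=(2+z)(1-z)$, $\lambda=-2$). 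So the paper's assertion that ``the minimality of $n_0$ guarantees $h\neq0$'' is justified only for $z_0\neq a$, and the ``moreover'' clause is left unproved at these points. Your repair --- restricting $C_{\psi,\varphi}^*$ to $K=\ker(M_z^*)^{n_0}$, checking $(C_{\psi,\varphi}^*)^{n_0}|_K=\overline{\psi(0)^{n_0}}I$ and the $\omega$-commutation with the nilpotent block $M_z^*|_K$, and then propagating one eigenvector through $\mathcal N,\dots,\mathcal N^{n_0-1}$ to realize all $n_0$ roots --- is correct as stated (I verified the Toeplitz computation $M_{\psi_{(n_0)}}^*z^j=\overline{\psi(0)^{n_0}}z^j$ for $j<n_0$ and the linear algebra), so your argument actually closes a gap in the published proof rather than merely reproducing it. Your closing remark that non-invertibility is never used is also accurate and consistent with the invertible-case result quoted in the introduction.
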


\begin{proof}
Since $\varphi'(a)^{n_0}=1$, we have $\varphi_{n_0}$ identity on $\mathbb{D}$. So
$$C_{\psi,\varphi}^{n_0}=C_{\psi_{(n_0)},\varphi_{n_0}}=T_{\psi_{(n_0)}},$$
where $\psi_{(n_0)}=\prod_{k=0}^{n_0-1}\psi\comp\varphi_k$. So the spectrum of $C_{\psi,\varphi}^{n_0}$ is exactly the spectrum of $T_{\psi_{(n_0)}}$, which is the closure of the set
$$\{\lambda : \lambda=\psi_{(n_0)}(z),z\in\mathbb{D}\}.$$
Therefore, by the spectral mapping theorem, the spectrum of $C_{\psi,\varphi}$ is contained in the closure of the set
$$\{\lambda : \lambda^{n_0}=\psi_{(n_0)}(z),z\in\mathbb{D}\}=\Lambda.$$
So what remains is to show that each point in $\Lambda$ is contained in the compression spectrum of $C_{\psi,\varphi}$.

For this end, fix any $\lambda_0\in\Lambda$, then there exist $z_0\in\mathbb{D}$ such that
$$\lambda_0^{n_0}=\prod_{k=0}^{n_0-1}\psi(\varphi_k(z_0)).$$
If $\lambda_0=0$, then $\psi(\varphi_{k_0}(z_0))$ for some $k_0\geqslant 0$. So
$$C_{\psi,\varphi}^*K_{\varphi_{k_0}(z_0)}=\overline{\psi(\varphi_{k_0}(z_0))}K_{\varphi_{k_0+1}(z_0)}=0.$$
Thus $0$ belongs to the point spectrum of $C_{\psi,\varphi}^*$. Otherwise, let
$$h=K_{z_0}+\sum_{k=1}^{n_0-1}\frac{\prod_{s=0}^{k-1}\overline{\psi(\varphi_s(z_0))}}{\overline{\lambda}_0^k}K_{\varphi_k(z_0)}.$$
Then the minimality of $n_0$ guarantee that $h\ne0$. It is easy to check that $C_{\psi,\varphi}^*h=\overline{\lambda}_0h$. Thus $\overline{\lambda}_0$ belongs to the point spectrum of $C_{\psi,\varphi}^*$. This means that $\lambda_0$ belongs to the compression spectrum of $C_{\psi,\varphi}$.
\end{proof}

\section{Hyperbolic \& Parabolic Automorphisms}

\subsection{Spectral Radius}

If $\varphi$ is a hyperbolic or parabolic automorphism of $\mathbb{D}$, then for any compact set $K$ in $\mathbb{D}$, $\varphi_n$ converges to one of its fixed points uniformly on $K$. We call it the Denjoy-Wolff point of $\varphi$.

Both hyperbolic and parabolic automorphisms have simple models for iteration on half-planes. For example, let $H^+=\{w\in\mathbb{C}:Im w>0\}$ be the upper half-plane on $\mathbb{C}$ and $\varphi$ a hyperbolic automorphism of $\mathbb{D}$ with Denjoy-Wolff point $a$ and the other fixed point $b$. Suppose $\sigma$ is a biholomorphic map from $\mathbb{D}$ onto $H^+$, then $\sigma$ can extend to the boundary. If we have $\sigma(a)=\infty$ and $\sigma(b)=0$, then $\Phi=\sigma\comp\varphi\comp\sigma^{-1}$ is an automorphism of $H^+$ that fixes $0$ and $\infty$, hence a dilation. More accurately, we have
$$\Phi(w)=\sigma\comp\varphi\comp\sigma^{-1}(w)=\varphi'(a)w$$
for all $w\in H^+$. Or equivalently,
$$\sigma\comp\varphi(z)=\varphi'(a)\cdot\sigma(z)$$
for all $z\in\mathbb{D}$.

If $\varphi$ a parabolic automorphism of $\mathbb{D}$ with Denjoy-Wolff point $a$, we can take $\sigma$ such that $\sigma(a)=\infty$. Then $\Phi=\sigma\comp\varphi\comp\sigma^{-1}$ is an automorphism of $H^+$ that fixes $\infty$ only, hence a translation. By choosing $\sigma$ properly, we can make $\Phi(w)=w+1$ or $\Phi(w)=w-1$ for all $w\in H^+$. Then we have
$$\sigma\comp\varphi(z)=\sigma(z)\pm 1$$
for all $z\in\mathbb{D}$.

These models facilitate us in calculating the spectral radius of $C_{\psi,\varphi}$. In fact, what we will do to prove the following two lemmas are basically the same as what have been done in \cite{Gun1} and \cite{Hyv}. However, we point out here that the assumptions on $\psi$ in \cite{Gun1} and \cite{Hyv} are too strong: there is no need to require that $\psi$ belongs to the disk algebra $A(D)$, and the continuity of $\psi$ at the fixed points of $\varphi$ is sufficient for us to get the following lemmas. So we just give brief proofs here.

\begin{lemma}\label{rad2}
Suppose $\varphi$ is a parabolic automorphism of $\mathbb{D}$ with Denjoy-Wolff point $a\in\partial\mathbb{D}$ and $\psi\in H^\infty(\mathbb{D})$ is continuous at the point $a$. Then the spectral radius of $C_{\psi,\varphi}$ on $H^2(\mathbb{D})$ is no larger than $|\psi(a)|$.
\end{lemma}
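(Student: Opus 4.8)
The plan is to imitate the argument used for Lemma \ref{rad1}, replacing the ergodic input by the translation model for parabolic iteration. First I would use the decomposition $C_{\psi,\varphi}^n=T_{\psi_{(n)}}C_{\varphi_n}$ recorded in the preliminaries, so that $\|C_{\psi,\varphi}^n\|\leqslant\|\psi_{(n)}\|_\infty\|C_{\varphi_n}\|$ (here $\psi_{(n)}\in H^\infty(\mathbb{D})$, so $\|T_{\psi_{(n)}}\|=\|\psi_{(n)}\|_\infty$). Passing to $1/n$-th powers and using submultiplicativity of $\limsup$ for nonnegative sequences, it suffices to control the two factors separately:
$$r(C_{\psi,\varphi})=\lim_{n\to\infty}\|C_{\psi,\varphi}^n\|^{1/n}\leqslant\limsup_{n\to\infty}\|\psi_{(n)}\|_\infty^{1/n}\cdot\limsup_{n\to\infty}\|C_{\varphi_n}\|^{1/n}.$$

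For the composition factor, each $\varphi_n$ is again an automorphism, so a direct computation in the half-plane model (where $\varphi$ acts as $w\mapsto w\pm1$ and $\varphi_n(0)$ corresponds to $\sigma(0)\pm n$) shows that $1-|\varphi_n(0)|$ decays only polynomially in $n$. Combined with the standard estimate $\|C_{\varphi_n}\|\leqslant\bigl((1+|\varphi_n(0)|)/(1-|\varphi_n(0)|)\bigr)^{1/2}$ this gives $\limsup_n\|C_{\varphi_n}\|^{1/n}=1$; equivalently, one may cite that the spectral radius of a composition operator induced by an automorphism is $1$.

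The weight factor is where continuity of $\psi$ at $a$ enters, and I would handle it as follows. Fix $\epsilon>0$ and choose a neighborhood $U$ of $a$ in $\overline{\mathbb{D}}$ with $|\psi|<|\psi(a)|+\epsilon$ on $U$. In the half-plane picture $a$ corresponds to $\infty$, so $U$ contains the image of a set $\{w\in\overline{H^+}:|w|>M\}$, and for a boundary point $\zeta=e^{i\theta}\neq a$ the orbit becomes $\sigma(\varphi_k(\zeta))=\sigma(\zeta)\pm k\in\mathbb{R}$. Hence the number of indices $k\in\{0,\dots,n-1\}$ with $\varphi_k(\zeta)\notin U$ is the number of integers $k$ with $\sigma(\zeta)\pm k\in[-M,M]$, which is at most $2M+1=:N$, a bound independent of both $\zeta$ and $n$. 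Estimating the remaining factors by $\|\psi\|_\infty$ yields, for almost every $\theta$,
$$|\psi_{(n)}(e^{i\theta})|=\prod_{k=0}^{n-1}|\psi(\varphi_k(e^{i\theta}))|\leqslant\|\psi\|_\infty^{N}\,(|\psi(a)|+\epsilon)^{n-N},$$
so that $\limsup_n\|\psi_{(n)}\|_\infty^{1/n}\leqslant|\psi(a)|+\epsilon$. Combining the two factors gives $r(C_{\psi,\varphi})\leqslant|\psi(a)|+\epsilon$, and letting $\epsilon\to0$ completes the argument.

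The main obstacle, and the only point where the hypothesis is genuinely used, is the uniform orbit count in the third paragraph: one must see that only boundedly many iterates escape the neighborhood $U$, uniformly in the starting boundary point. This is exactly what the translation model delivers, and it is also why mere continuity of $\psi$ at $a$ suffices in place of $\psi\in A(\mathbb{D})$, since all but $N$ of the orbit points are concentrated near $a$ where $\psi$ is pinned down by its value $\psi(a)$. A small technical check I would include is that continuity of $\psi$ at the boundary point $a$ controls the radial boundary values $\psi(\varphi_k(e^{i\theta}))$ for iterates landing in $U$, so that the displayed pointwise bound is valid almost everywhere.
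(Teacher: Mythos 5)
Your proposal is correct and follows essentially the same route as the paper: the factorization $C_{\psi,\varphi}^n=T_{\psi_{(n)}}C_{\varphi_n}$, the fact that $r(C_\varphi)=1$ for an automorphism, and the key observation from the translation model that at most a uniformly bounded number $N$ of orbit points lie outside a prescribed neighbourhood of $a$, giving $\limsup_n\|\psi_{(n)}\|_\infty^{1/n}\leqslant|\psi(a)|+\epsilon$. The only cosmetic difference is that the paper estimates $\sup_{z\in\mathbb{D}}\prod_{k}|\psi(\varphi_k(z))|$ over interior points, which sidesteps the boundary-value technicality you flag at the end.
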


\begin{proof}
Suppose $\Phi$ is a translation on $H^+$. Without loss of generality, we may assume that $\Phi(w)=w+1$. Let $r\in(0,+\infty)$, then for any $w\in H^+$, at most $2[r]+1$ elements of the sequence $\{\Phi_k(w)\}_{k=0}^{\infty}$ lies in the set $\{w\in H^+ : |w|\leqslant r\}$. By our discussion above, it is equivalent to say that for any neighbourhoods $V$ of $a$ , we can find $N\in\mathbb{N}$ such that at most $N$ elements of the sequence $\{\varphi_k(z)\}_{k=0}^{\infty}$ lies in the set $\mathbb{D}\backslash V$ for all $z\in\mathbb{D}$. By choosing $V$ small enough, we can assume $|\psi(z)|<|\psi(a)|+\epsilon$ when $z\in V_a$ for any given $\epsilon>0$. Let $\psi_{(n)}=\prod_{k=0}^{n-1}\psi\comp\varphi_k$, then for $n>N$ we have
\begin{align*}
||\psi_{(n)}||_{\infty}&=\sup_{z\in\mathbb{D}}\prod_{k=0}^{n-1}|\psi\comp\varphi_k(z)|
\\&\leqslant ||\psi||_{\infty}^N\cdot\left(|\psi(a)|+\epsilon\right)^{n-N}.
\end{align*}
So
\begin{align*}
\limsup_{n\to\infty}||\psi_{(n)}||_{\infty}^{1/n}&\leqslant\lim_{n\to\infty}\left(||\psi||_{\infty}^{N/n}\cdot\left(|\psi(a)|+\epsilon\right)^{1-N/n}\right).
\\&=|\psi(a)|+\epsilon.
\end{align*}
Since $\epsilon$ is arbitrary, we have
$$\limsup_{n\to\infty}||\psi_{(n)}||_{\infty}^{1/n}\leqslant|\psi(a)|.$$

Now for $n\in\mathbb{N}$,
$$C_{\psi,\varphi}^n=C_{\psi_{(n)},\varphi_n}=T_{\psi_{(n)}}C_{\varphi}^n.$$
So
\begin{align*}
r(C_{\psi,\varphi})&=\lim_{n\to\infty}||C_{\psi,\varphi}^n||^{1/n}
\\&\leqslant\limsup_{n\to\infty}\left(||T_{\psi_{(n)}}||^{1/n}\cdot||C_{\varphi}^n||^{1/n}\right)
\\&=r(C_\varphi)\cdot\limsup_{n\to\infty}||\psi_{(n)}||_\infty^{1/n}
\\&\leqslant|\psi(a)|\cdot r(C_\varphi).
\end{align*}
Here $r(C_\varphi)$ is the spectral radius of composition operator $C_\varphi$. By Theorem 3.9 in \cite{CM}, we know that $r(C_\varphi)=\varphi'(a)^{-1/2}=1$. So we have
$$r(C_{\psi,\varphi})\leqslant|\psi(a)|.$$
\end{proof}

\begin{lemma}\label{rad3}
Suppose $\varphi$ is a hyperbolic automorphism of $\mathbb{D}$ with Denjoy-Wolff point $a\in\partial\mathbb{D}$ and the other fixed point $b$. Assume that $\psi\in H^\infty(\mathbb{D})$ is continuous at the points $a$ and $b$. Then the spectral radius of $C_{\psi,\varphi}$ on $H^2(\mathbb{D})$ is no larger than $\max\{|\psi(a)|\varphi'(a)^{-1/2},|\psi(b)|\varphi'(b)^{-1/2}\}$.
\end{lemma}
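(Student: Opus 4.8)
The plan is to estimate $\|C_{\psi,\varphi}^n\|$ directly through boundary integrals, rather than by the crude splitting $C_{\psi,\varphi}^n=T_{\psi_{(n)}}C_{\varphi_n}$ into $\|T_{\psi_{(n)}}\|\cdot\|C_{\varphi_n}\|$. The latter would only give the non-sharp bound $\varphi'(a)^{-1/2}\max\{|\psi(a)|,|\psi(b)|\}$, because it fails to attach the correct weight to each fixed point. First I would write, for $f\in H^2(\mathbb{D})$,
$$\|C_{\psi,\varphi}^n f\|^2=\frac{1}{2\pi}\int_0^{2\pi}|\psi_{(n)}(e^{i\theta})|^2\,|f(\varphi_n(e^{i\theta}))|^2\,d\theta,$$
and change variables along the boundary via the diffeomorphism $\varphi_n$ of $\partial\mathbb{D}$. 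Since $d\theta=|\varphi_{-n}'(\omega)|\,|d\omega|$ under $\omega=\varphi_n(e^{i\theta})$ and $|f|^2$ integrates to $\|f\|^2$, this yields
$$\|C_{\psi,\varphi}^n\|^2\leqslant\sup_{\zeta\in\partial\mathbb{D}}\frac{|\psi_{(n)}(\zeta)|^2}{|\varphi_n'(\zeta)|}=\sup_{\zeta\in\partial\mathbb{D}}\prod_{k=0}^{n-1}\frac{|\psi(\varphi_k(\zeta))|^2}{|\varphi'(\varphi_k(\zeta))|},$$
where the last equality is the chain rule applied to both $\psi_{(n)}=\prod_{k=0}^{n-1}\psi\comp\varphi_k$ and $\varphi_n'=\prod_{k=0}^{n-1}\varphi'\comp\varphi_k$, and the suprema are essential suprema over a.e.\ boundary values.

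Next I would set $F=|\psi|^2/|\varphi'|$ on $\partial\mathbb{D}$, so that $F(a)^{1/2}=|\psi(a)|\varphi'(a)^{-1/2}$ and $F(b)^{1/2}=|\psi(b)|\varphi'(b)^{-1/2}$ are precisely the two quantities appearing in the statement. Two elementary properties of $F$ are needed: it is bounded on $\partial\mathbb{D}$, say $F\leqslant C_0$, because $|\psi|\leqslant\|\psi\|_\infty$ and $|\varphi'|$ is bounded below on $\partial\mathbb{D}$ for an automorphism; and it is continuous at $a$ and $b$, since $\psi$ is continuous there by hypothesis and $|\varphi'|$ extends continuously and without zeros to those two points.

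The heart of the argument is then the same orbit-counting estimate used in the parabolic Lemma~\ref{rad2}, now carried out for $F$ and using both fixed points. Writing $s=\varphi'(a)^{-1}=\varphi'(b)>1$ and passing to the half-plane dilation model, a boundary point $\zeta\neq a,b$ corresponds to a point $x_0\neq0$ of $\partial H^+$ whose forward orbit is a geometric sequence of ratio $s$ or $s^{-1}$. Hence for any neighbourhoods $V_a,V_b$ of $a,b$ corresponding to $\{|w|>M\}$ and $\{|w|<\delta\}$, the number of iterates $\varphi_k(\zeta)$ falling in the complementary annulus $\{\delta\leqslant|w|\leqslant M\}$ is at most $N:=\lfloor\log_s(M/\delta)\rfloor+1$, uniformly in $\zeta$ and $n$. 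Given $\epsilon>0$, I would shrink $V_a,V_b$ so that $F\leqslant\max\{F(a),F(b)\}+\epsilon$ on $V_a\cup V_b$, and bound the at most $N$ remaining factors by $C_0$, obtaining
$$\prod_{k=0}^{n-1}F(\varphi_k(\zeta))\leqslant C_0^{N}\bigl(\max\{F(a),F(b)\}+\epsilon\bigr)^{n-N}$$
for every $\zeta$. Taking $n$-th roots, letting $n\to\infty$ and then $\epsilon\to0$ gives $\limsup_n\|C_{\psi,\varphi}^n\|^{2/n}\leqslant\max\{F(a),F(b)\}$, hence
$$r(C_{\psi,\varphi})\leqslant\max\{F(a)^{1/2},F(b)^{1/2}\}=\max\{|\psi(a)|\varphi'(a)^{-1/2},\,|\psi(b)|\varphi'(b)^{-1/2}\}.$$

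I expect the main obstacle to be the rigorous justification of the boundary change of variables: namely that the operator norm really is controlled by the essential supremum of the weighted product, which requires checking that $\psi_{(n)}\cdot(f\comp\varphi_n)$ has the expected boundary values a.e.\ and that the substitution $\omega=\varphi_n(e^{i\theta})$ is measure-theoretically valid on $\partial\mathbb{D}$. A secondary technical point is the uniformity (in the starting boundary point) of the bound $N$ on the number of ``middle'' iterates, which is exactly what the half-plane model supplies. The continuity and boundedness of $F$, and the passage from the product estimate to the spectral radius, are routine.
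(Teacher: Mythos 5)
Your proposal is correct and is essentially the paper's argument: your boundary change of variables $\omega=\varphi_n(e^{i\theta})$ is exactly the verification that $C_{(\varphi_n')^{1/2},\varphi_n}$ is an isometry on $H^2(\mathbb{D})$ (which the paper cites from the proof of Theorem 4.6 in \cite{Hyv}), and your bound $\|C_{\psi,\varphi}^n\|^2\leqslant\operatorname{ess\,sup}_{\partial\mathbb{D}}|\psi_{(n)}|^2/|\varphi_n'|$ is the square of the paper's estimate $\|C_{\psi,\varphi}^n\|\leqslant\|\psi_{(n)}/(\varphi_n')^{1/2}\|_\infty$. The subsequent orbit-counting near the two fixed points via the half-plane dilation model, applied to $|\psi|^2/|\varphi'|$ rather than to $|\psi|/|\varphi'|^{1/2}$ on $\mathbb{D}$, is the same uniform ``at most $N$ middle iterates'' estimate the paper uses.
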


\begin{proof}
Let $V_a$ and $V_b$ be neighbourhoods of $a$ and $b$ respectively, then by a similar discussion as in Lemma \ref{rad2}, we can find $N\in\mathbb{N}$ such that at most $N$ elements of the sequence $\{\varphi_k(z)\}_{k=0}^{\infty}$ lies in the set $\mathbb{D}\backslash(V_a\cup V_b)$ for all $z\in\mathbb{D}$. By choosing $V_a$ and $V_b$ small enough, we can assume
$$|\psi(z)|\varphi'(z)^{-1/2}<|\psi(a)|\varphi'(a)^{-1/2}+\epsilon$$
when $z\in V_a$ and
$$|\psi(z)|\varphi'(z)^{-1/2}<|\psi(b)|\varphi'(b)^{-1/2}+\epsilon$$
when $z\in V_b$, for any given $\epsilon>0$. Let $\psi_{(n)}=\prod_{k=0}^{n-1}\psi\comp\varphi_k$, then for $n>N$ we have
\begin{align*}
\Big|\Big|\frac{\psi_{(n)}}{(\varphi_n')^{1/2}}\Big|\Big|_{\infty}&=\sup_{z\in\mathbb{D}}\prod_{k=0}^{n-1}\left|\frac{\psi}{(\varphi')^{1/2}}\comp\varphi_k(z)\right|
\\&\leqslant P^N\cdot\left(Q+\epsilon\right)^{n-N},
\end{align*}
where $P=||\psi\cdot(\varphi')^{-1/2}||_\infty$ and $Q=\max\{|\psi(a)|\varphi'(a)^{-1/2},|\psi(b)|\varphi'(b)^{-1/2}\}$.
Since $\epsilon$ is arbitrary, we have
$$\limsup_{n\to\infty}\Big|\Big|\frac{\psi_{(n)}}{(\varphi_n')^{1/2}}\Big|\Big|_{\infty}^{1/n}\leqslant Q.$$
The proof of Theorem 4.6 in \cite{Hyv} shows that $$||C_{(\varphi_n')^{1/2},\varphi_n}||=1$$
for all $n\in\mathbb{N}$. So we have
\begin{align*}
r(C_{\psi,\varphi})&=\lim_{n\to\infty}||C_{\psi_{(n)},\varphi_n}||^{1/n}
\\&\leqslant\limsup_{n\to\infty}\left(\Big|\Big|\frac{\psi_{(n)}}{(\varphi_n')^{1/2}}\Big|\Big|_{\infty}^{1/n}\cdot||C_{(\varphi_n')^{1/2},\varphi_n}||^{1/n}\right)
\\&\leqslant Q.
\end{align*}
\end{proof}

\begin{remark}\label{r2}
We'd like to point out here that the proofs above actually show the following fact: let $\varphi$ be a hyperbolic or parabolic automorphism and $K$ a subset of $\mathbb{D}$, then $\varphi_n$ converges to the Denjoy-Wolff point of $\varphi$ uniformly on $K$ if and only if the Denjoy-Wolff point of $\varphi^{-1}$ lies outside the closure of $K$.
\end{remark}

\subsection{The Spectra}

If $\varphi$ is a hyperbolic or parabolic automorphism of $\mathbb{D}$, then for any $z\in\mathbb{D}$, $\{\varphi_k(z)\}_{k=0}^{\infty}$ is an interpolating sequence for $H^\infty(\mathbb{D})$. By Carleson' condition, the product $\delta_z=\prod_{k=1}^{\infty}d(z,\varphi_k(z))$ is positive for each $z\in\mathbb{D}$. Here
$$d(z,w)=\left|\frac{z-w}{1-\overline{w}z}\right|$$
is the pseudo-hyperbolic distance between $z$ and $w$. The next two lemmas show that $\delta_z$ is in fact bounded away from zero with respect to $z$.

\begin{lemma}\label{int1}
Suppose $\varphi$ is a hyperbolic automorphism of $\mathbb{D}$. Then there exist $\delta>0$ such that
$\prod_{k=1}^{\infty}d(z,\varphi_k(z))>\delta$ for all $z\in\mathbb{D}$.
\end{lemma}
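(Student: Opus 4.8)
The plan is to pass to the half-plane model, where $\varphi$ becomes a pure dilation, and then to exploit the Möbius-invariance of the pseudo-hyperbolic distance to turn the infinite product into an explicit one that is manifestly bounded below, uniformly in $z$. First I would fix a biholomorphism $\sigma:\mathbb{D}\to H^+$ sending the two fixed points of $\varphi$ to $0$ and $\infty$, exactly as in the discussion preceding the lemma, so that $\Phi=\sigma\circ\varphi\circ\sigma^{-1}$ is the dilation $\Phi(w)=\mu w$ with $\mu=\varphi'(a)$, a positive number $\ne 1$ since $\varphi$ is hyperbolic. A biholomorphism between the disk and the half-plane is an isometry for the pseudo-hyperbolic metrics, so $d(z,w)=d_{H^+}(\sigma(z),\sigma(w))$, where $d_{H^+}(w_1,w_2)=\left|\frac{w_1-w_2}{w_1-\overline{w_2}}\right|$ is the pseudo-hyperbolic distance on $H^+$. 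Writing $w=\sigma(z)$ and using $\sigma(\varphi_k(z))=\Phi_k(w)=\mu^kw$, this gives
$$d(z,\varphi_k(z))=d_{H^+}(w,\mu^kw)=\left|\frac{(1-\mu^k)w}{w-\mu^k\overline{w}}\right|,$$
so it suffices to bound $\prod_{k=1}^{\infty}d_{H^+}(w,\mu^kw)$ below, uniformly over $w\in H^+$.

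Next I would use that the positive dilations $w\mapsto\lambda w$ are automorphisms of $H^+$ and hence preserve $d_{H^+}$; this lets me normalize $|w|=1$ and write $w=e^{i\theta}$ with $\theta\in(0,\pi)$. A direct computation then gives
$$d_{H^+}(e^{i\theta},\mu^ke^{i\theta})^2=\frac{(1-\mu^k)^2}{1-2\mu^k\cos 2\theta+\mu^{2k}},$$
which for each fixed $k$ is smallest when the denominator is largest, i.e. at $\theta=\pi/2$, where the denominator equals $(1+\mu^k)^2$. Hence every factor obeys the $w$-independent lower bound
$$d_{H^+}(w,\mu^kw)\geqslant\frac{|1-\mu^k|}{1+\mu^k},$$
and therefore $\prod_{k=1}^{\infty}d(z,\varphi_k(z))\geqslant\prod_{k=1}^{\infty}\frac{|1-\mu^k|}{1+\mu^k}$ for every $z\in\mathbb{D}$.

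Finally I would check that the right-hand product is a positive constant. Writing each factor as $1-\frac{2}{\mu^k+1}$ when $\mu>1$ (or $1-\frac{2\mu^k}{1+\mu^k}$ when $\mu<1$), the associated series $\sum_k\bigl(1-\tfrac{|1-\mu^k|}{1+\mu^k}\bigr)$ is dominated by a convergent geometric series of ratio $\mu^{-1}$ or $\mu$, so the product converges to some $\delta_0>0$; taking $\delta=\delta_0/2$ yields the strict inequality in the statement. The only genuinely delicate point is the uniformity in $z$: the reduction to $|w|=1$ via dilation-invariance, together with the observation that the single worst angle $\theta=\pi/2$ minimizes every factor simultaneously, is what makes the per-factor bound independent of $z$. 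Once that is in place the remaining estimate on a geometrically convergent product is routine.
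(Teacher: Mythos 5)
Your proof is correct, and its core is the same as the paper's: pass to the half-plane model where $\varphi$ acts as the dilation $w\mapsto\mu w$, use the M\"obius-invariance of the pseudo-hyperbolic distance, and obtain the per-factor bound $d(z,\varphi_k(z))\geqslant\frac{|1-\mu^k|}{1+\mu^k}$ uniformly in $z$ (the paper gets this by the one-line triangle-inequality estimate $|w-s^k\overline{w}|\leqslant(1+s^k)|w|$, while you normalize $|w|=1$ by dilation-invariance and minimize over the angle; both are fine). The two arguments diverge only at the last step: the paper observes that $\frac{1-s^k}{1+s^k}=d(z_0,\varphi_k(z_0))$ for the special point $z_0=\sigma^{-1}(i)$ and then invokes the fact, stated just before the lemma, that iteration orbits of hyperbolic automorphisms are interpolating sequences, so that Carleson's condition gives $\prod_k d(z_0,\varphi_k(z_0))>0$. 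You instead prove positivity of $\prod_k\frac{|1-\mu^k|}{1+\mu^k}$ directly, by noting that $\sum_k\bigl(1-\tfrac{|1-\mu^k|}{1+\mu^k}\bigr)$ is dominated by a convergent geometric series. Your route is more elementary and self-contained, since it avoids any appeal to interpolation theory; the paper's route is shorter given that the interpolating-sequence fact has already been put on the table and is reused elsewhere in the section. Either way the uniformity in $z$ is handled correctly, which is the only delicate point.
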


\begin{proof}
Since $\varphi$ is an hyperbolic automorphism, we can find a biholomorphic map $\sigma$ from $\mathbb{D}$ to the upper half plane $H^+$ such that
$$\sigma\comp\varphi(z)=s\sigma(z)$$
for all $z\in\mathbb{D}$. Here $s=\varphi'(a)\in(0,1)$.

Since the pseudohyperbolic distance on $H^+$ is given by
$$\rho(w_1,w_2)=\left|\frac{w_1-w_2}{w_1-\overline{w_2}}\right|,$$
for any $z\in\mathbb{D}$, let $w=\sigma(z)$, then we have
\begin{align*}
d(z,\varphi_k(z))&=\rho(\sigma(z),\sigma\comp\varphi_k(z))
\\&=\rho(w,s^kw)
\\&=\left|\frac{w-s^kw}{w-s^k\overline{w}}\right|
\\&\geqslant\frac{1-s^k}{1+s^k}
\\&=\rho(i,s^ki)=d(z_0,\varphi_k(z_0)).
\end{align*}
Here $z_0=\sigma^{-1}(i)$. Since $\{\varphi_k(z_0)\}_{k=0}^{\infty}$ is an interpolating sequence,
$$\delta_{z_0}=\prod_{k=1}^{\infty}d(z_0,\varphi_k(z_0))>0.$$
So we have
$$\prod_{k=1}^{\infty}d(z,\varphi_k(z))\geqslant\delta_{z_0}>0$$
for all $z\in\mathbb{D}$.
\end{proof}

\begin{lemma}\label{int2}
Suppose $\varphi$ is a parabolic automorphism of $\mathbb{D}$ with Denjoy-Wolff point $a\in\partial\mathbb{D}$. Then for any $\epsilon>0$. there exist $\delta>0$ such that
$\prod_{k=1}^{\infty}d(z,\varphi_k(z))>\delta$ whenever $|z-a|>\epsilon$ and $z\in\mathbb{D}$.
\end{lemma}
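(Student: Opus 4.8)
The plan is to imitate the half-plane computation of Lemma~\ref{int1}, but now to keep careful track of how the product depends on the location of $z$, since that is exactly what the restriction $|z-a|>\epsilon$ controls. First, after replacing $\varphi$ by $R\comp\varphi\comp R^{-1}$ for a rotation $R$ with $R(a)=1$ — which alters neither the product (by conformal invariance of $d$) nor the quantity $|z-a|$ — I may assume the Denjoy--Wolff point is $a=1$. Because $\varphi$ is parabolic, the parabolic model furnishes a (Möbius) biholomorphism $\sigma:\mathbb{D}\to H^+$ with $\sigma(1)=\infty$ and $\sigma\comp\varphi(z)=\sigma(z)+1$ (the $\sigma(z)-1$ normalization is identical), so that $\sigma\comp\varphi_k(z)=\sigma(z)+k$. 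Writing $w=\sigma(z)$ and $v=\operatorname{Im}w>0$ and using the conformal invariance of the pseudohyperbolic distance together with the half-plane formula $\rho(w_1,w_2)=\left|\frac{w_1-w_2}{w_1-\overline{w_2}}\right|$, exactly as in Lemma~\ref{int1}, I would record the exact value
$$d(z,\varphi_k(z))=\rho(w,w+k)=\frac{k}{\sqrt{k^2+4v^2}}.$$

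Next I would put the infinite product in closed form. The display above gives
$$\prod_{k=1}^{\infty}d(z,\varphi_k(z))=\left(\prod_{k=1}^{\infty}\Bigl(1+\tfrac{4v^2}{k^2}\Bigr)\right)^{-1/2}=\sqrt{\frac{2\pi v}{\sinh(2\pi v)}},$$
where the second equality is the Euler product $\prod_{k\ge1}(1+x^2/k^2)=\sinh(\pi x)/(\pi x)$ with $x=2v$. The function $v\mapsto 2\pi v/\sinh(2\pi v)$ is continuous and strictly decreasing on $(0,\infty)$ with limit $1$ at $0$; hence producing a positive lower bound for the product is equivalent to producing an \emph{upper} bound for $v=\operatorname{Im}\sigma(z)$.

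It then remains to convert $|z-a|>\epsilon$ into such an upper bound. Every Möbius map sending $\mathbb{D}$ onto $H^+$ and $1$ to $\infty$ differs from the Cayley map $z\mapsto i\frac{1+z}{1-z}$ by a post-composed affine automorphism $w\mapsto\lambda w+\mu$ of $H^+$ ($\lambda>0$, $\mu\in\mathbb{R}$), and a real translation $\mu$ does not affect imaginary parts; therefore $\operatorname{Im}\sigma(z)=\lambda\,\frac{1-|z|^2}{|1-z|^2}$ for some $\lambda>0$. Since $1-|z|^2\le1$, this yields $v\le\lambda/|1-z|^2<\lambda/\epsilon^2$ whenever $|z-a|>\epsilon$. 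Feeding this into the previous estimate gives
$$\prod_{k=1}^{\infty}d(z,\varphi_k(z))\ge\sqrt{\frac{2\pi\lambda/\epsilon^2}{\sinh(2\pi\lambda/\epsilon^2)}}=:\delta>0$$
for all $z\in\mathbb{D}$ with $|z-a|>\epsilon$, which is exactly the assertion.

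The one genuinely geometric point, and the step I expect to be the main obstacle, is this last one. In contrast to the hyperbolic situation of Lemma~\ref{int1}, the product here is \emph{not} bounded below uniformly over all of $\mathbb{D}$: it tends to $0$ precisely as $z\to a$, that is, as $v=\operatorname{Im}\sigma(z)\to\infty$, the super-level sets $\{v>M\}$ being horodisks internally tangent to $\partial\mathbb{D}$ at $a$ that contract to $a$ as $M\to\infty$. The substance of the lemma is that remaining a fixed Euclidean distance from $a$ keeps $\operatorname{Im}\sigma(z)$ bounded, and the explicit horocyclic formula for $\operatorname{Im}\sigma$ makes this quantitative. By comparison, the Euler product identity and the monotonicity of $v\mapsto 2\pi v/\sinh(2\pi v)$ are entirely routine.
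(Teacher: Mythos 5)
Your proof is correct. It follows the same basic route as the paper — pass to the upper half-plane model where $\varphi$ becomes $w\mapsto w\pm1$, compute $d(z,\varphi_k(z))=\rho(w,w\pm k)=k/\sqrt{k^2+4v^2}$ with $v=\operatorname{Im}\sigma(z)$, and observe that the restriction $|z-a|>\epsilon$ is exactly what keeps $v$ bounded — but it diverges from the paper in how the two remaining facts are established. The paper bounds $v$ qualitatively (since $\sigma(a)=\infty$, the set $\{|z-a|>\epsilon\}$ is mapped into $\{|w|<R\}$ for some $R$), compares the product termwise with the product along the orbit of the single reference point $z_0=\sigma^{-1}(Ri)$, and then invokes the fact that $\{\varphi_k(z_0)\}$ is an interpolating sequence, so that $\delta_{z_0}=\prod_k d(z_0,\varphi_k(z_0))>0$ by Carleson's condition. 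You instead evaluate the product in closed form via the Euler product for $\sinh$, obtaining $\sqrt{2\pi v/\sinh(2\pi v)}$, and bound $v$ explicitly through the horocycle formula $\operatorname{Im}\sigma(z)=\lambda(1-|z|^2)/|1-z|^2$. Your version is more self-contained (it does not use the interpolation/Carleson black box that the paper relies on here and in Lemma \ref{int1}) and yields an explicit value of $\delta$ in terms of $\epsilon$ and the normalization constant $\lambda$; the paper's version is shorter given that the interpolating-sequence fact is already needed elsewhere in Section 3. Both your reduction to $a=1$ by a rotation and your remark that the $\sigma(z)-1$ normalization gives the identical formula are fine, and your closing observation — that the product genuinely degenerates as $z\to a$ along horodisks, so the hypothesis $|z-a|>\epsilon$ cannot be dropped — correctly identifies why this lemma, unlike Lemma \ref{int1}, cannot be uniform over all of $\mathbb{D}$.
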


\begin{proof}
Since $\varphi$ is a parabolic automorphism, we can find a biholomorphic map $\sigma$ from $\mathbb{D}$ to the upper half plane $H^+$ such that
$$\sigma\comp\varphi(z)=\sigma(z)+c$$
for all $z\in\mathbb{D}$. Here $c=\pm 1$.

Since $\sigma$ maps $a$ to $\infty$, we can find $R>0$ such that $|\sigma(z)|<R$ for all $z\in\mathbb{D}$ and $|z-a|>\epsilon$. Let $w=\sigma(z)$, then we have
\begin{align*}
d(z,\varphi_k(z))&=\rho(\sigma(z),\sigma\comp\varphi_k(z))
\\&=\rho(w,w+kc)
\\&=\left|\frac{w-w-kc}{w-\overline{w}-kc}\right|
\\&\geqslant\frac{k}{|2Ri-kc|}
\\&=\rho(Ri,Ri+kc)=d(z_0,\varphi_k(z_0)).
\end{align*}
Here $z_0=\sigma^{-1}(Ri)$. Since $\{\varphi_k(z_0)\}_{k=0}^{\infty}$ is an interpolating sequence,
$$\delta_{z_0}=\prod_{k=1}^{\infty}d(z_0,\varphi_k(z_0))>0.$$
So we have
$$\prod_{k=1}^{\infty}d(z,\varphi_k(z))\geqslant\delta_{z_0}>0$$
for all $z\in\mathbb{D}$ and $|z-a|>\epsilon$.
\end{proof}

\begin{lemma}\label{sp2}
Let $\varphi$ be a parabolic or hyperbolic automorphism of $\mathbb{D}$. Suppose $b\in\partial\mathbb{D}$ is the Denjoy-Wolff point of $\varphi^{-1}$ and $\psi\in H^\infty(\mathbb{D})$ is continuous at the point $b$. If $C_{\psi,\varphi}$ is not invertible, then the set
$$\{\lambda : |\lambda|\leqslant|\psi(b)|\varphi'(b)^{-1/2}\}$$
is contained in the spectrum of $C_{\psi,\varphi}$ on $H^2(\mathbb{D})$.
\end{lemma}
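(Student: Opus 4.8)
The plan is to show that every $\lambda$ in the open disk $|\lambda|<\rho$, where $\rho=|\psi(b)|\varphi'(b)^{-1/2}$, lies in the approximate point spectrum $\sigma_{ap}(C_{\psi,\varphi}^*)$. Since $\sigma_{ap}(C_{\psi,\varphi}^*)\subseteq\sigma(C_{\psi,\varphi}^*)=\{\overline{\mu}:\mu\in\sigma(C_{\psi,\varphi})\}$ and the disk is symmetric under conjugation, this places the open disk inside $\sigma(C_{\psi,\varphi})$, and closedness of the spectrum then upgrades this to the closed disk. I would begin with two reductions. First, because $C_{\psi,\varphi}$ is not invertible we have $0\in\sigma(C_{\psi,\varphi})$ at once, and Gunatillake's invertibility criterion (with $\varphi$ an automorphism) forces $\inf_{z\in\mathbb{D}}|\psi(z)|=0$; hence there are $z_j\in\mathbb{D}$ with $\psi(z_j)\to0$. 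Second, if $\psi(b)=0$ then $\rho=0$ and nothing remains, so I assume $\psi(b)\neq0$; continuity of $\psi$ at $b$ then confines the $z_j$ to a region $\{|z-b|\geqslant\epsilon_0\}$.

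Fix $\lambda$ with $0<|\lambda|<\rho$. For each $j$ let $w_k$ (suppressing the index $j$) be the $k$-th iterate of $\varphi^{-1}$ at $z_j$, so $w_0=z_j$, $\varphi(w_k)=w_{k-1}$, and $w_k\to b$ since $b$ is the Denjoy--Wolff point of $\varphi^{-1}$; after a harmless perturbation of $z_j$ I may assume $\psi(w_1),\dots,\psi(w_{N_j})$ are all nonzero. Mimicking the chains of Proposition \ref{sp11}, set
$$h_j=\sum_{k=0}^{N_j}\frac{\lambda^k}{\prod_{s=1}^{k}\overline{\psi(w_s)}}K_{w_k},$$
where the coefficients are rigged so that the action of $C_{\psi,\varphi}^*$ on the interior of the chain telescopes. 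By Lemma \ref{base} and $\varphi(w_k)=w_{k-1}$,
$$(C_{\psi,\varphi}^*-\lambda)h_j=\overline{\psi(z_j)}K_{\varphi(z_j)}-\frac{\lambda^{N_j+1}}{\prod_{s=1}^{N_j}\overline{\psi(w_s)}}K_{w_{N_j}},$$
leaving only a head term from the front of the chain and a tail term from its end.

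The estimates hinge on the automorphism identity $1-|\varphi(z)|^2=|\varphi'(z)|(1-|z|^2)$, which turns every ratio of kernel norms into a product of factors $|\varphi'(w_s)|^{1/2}$ along the orbit. Dividing by $\|K_{z_j}\|$, the head gives $|\psi(z_j)||\varphi'(z_j)|^{-1/2}$, which is $\leqslant C|\psi(z_j)|\to0$ because $|\varphi'|$ is bounded above and below by positive constants on $\mathbb{D}$; the tail gives
$$|\lambda|\prod_{s=1}^{N_j}\frac{|\lambda||\varphi'(w_s)|^{1/2}}{|\psi(w_s)|},$$
whose factors tend to $|\lambda|\varphi'(b)^{1/2}/|\psi(b)|=|\lambda|/\rho<1$ as $w_s\to b$ (using continuity of $\psi$ and $\varphi'$ at $b$), so for fixed $z_j$ this product tends to $0$ as $N_j\to\infty$. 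The main obstacle is to convert these into bounds on $\|(C_{\psi,\varphi}^*-\lambda)h_j\|/\|h_j\|$, which requires a lower bound on $\|h_j\|$ even though the $K_{w_k}$ are not orthogonal. I would obtain it as in Proposition \ref{sp11}: pairing $h_j$ with $B_jK_{z_j}/\|K_{z_j}\|$, where $B_j$ is the Blaschke product vanishing at $w_1,\dots,w_{N_j}$, isolates the leading term and yields $\|h_j\|\geqslant|B_j(z_j)|\|K_{z_j}\|$ with $|B_j(z_j)|=\prod_{s=1}^{N_j}d(z_j,w_s)$. The decisive input is that this product is bounded below uniformly in $j$ and $N_j$ by the interpolating-sequence estimates of Lemma \ref{int1} and Lemma \ref{int2} applied to $\varphi^{-1}$ (the parabolic case using that the $z_j$ stay off a neighbourhood of $b$), so $\|h_j\|\geqslant\delta\|K_{z_j}\|$ for a fixed $\delta>0$.

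With the uniform lower bound in hand I would finish by a diagonal choice: take $z_j$ with $|\psi(z_j)|<1/j$ and then $N_j$ so large that the tail product is below $1/j$. Both the head and tail contributions to $\|(C_{\psi,\varphi}^*-\lambda)h_j\|/\|h_j\|$ then vanish as $j\to\infty$, giving $\lambda\in\sigma_{ap}(C_{\psi,\varphi}^*)$. Allowing $\lambda$ to range over $0<|\lambda|<\rho$, together with $0\in\sigma(C_{\psi,\varphi})$ and the closedness of the spectrum, yields $\{\lambda:|\lambda|\leqslant\rho\}\subseteq\sigma(C_{\psi,\varphi})$, as required.
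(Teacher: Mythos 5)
Your proposal is correct and follows essentially the same route as the paper: approximate eigenvectors built as telescoping chains of reproducing kernels along the backward orbit $\varphi_k^{-1}(z_j)$ of points where $\psi$ is small, with the head term killed by $\psi(z_j)\to 0$, the tail killed because the ratios $|\lambda||\varphi'|^{1/2}/|\psi|$ tend to $|\lambda|/\rho<1$ near $b$, and the lower bound on $\|h_j\|$ obtained from the Blaschke product via Lemmas \ref{int1} and \ref{int2}. The only differences are cosmetic (unnormalized kernels, and a diagonal choice of chain length $N_j$ in place of the paper's uniform-in-$j$ estimate), so no further comment is needed.
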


\begin{proof}
The case when $\psi(b)=0$ is trivial since $C_{\psi,\varphi}$ is not invertible. So we can assume that $\psi(b)\ne0$.

Since $C_{\psi,\varphi}$ is not invertible, we can find a sequence $\{z_j\}_{j=1}^{\infty}$ in $\mathbb{D}$ such that $\psi(z_j)$ tends to $0$ and $\psi(\varphi^{-1}_k(z_j))\ne 0$ for all $j\in\mathbb{N}$ and $k>0$.

Let $\{z_{j,k}\}_{k=0}^{\infty}$ be the iteration sequence of $z_j$ under $\varphi^{-1}$, that is, $z_{j,k}=\varphi^{-1}_k(z_j)$. Set $z_{j,-1}=\varphi(z_j)$. Then $z_{j,k}$ converges to $b$ as $k\to\infty$ for each $j\in\mathbb{N}$. Moreover, since $\psi(b)\ne0$, Remark \ref{r2} shows that the convergence is uniform with respect to $j$.

Let
$$e_{j,k}=\frac{K_{z_{j,k}}}{||K_{z_{j,k}}||}$$
denote the normalized reproducing kernel at the point $z_{j,k}$ in $H^2(\mathbb{D})$. Then we have
$$C_{\psi,\varphi}^*e_{j,k}=u_{j,k}e_{j,k-1},$$
where
$$u_{j,k}=\overline{\psi(z_{j,k})}\frac{||K_{z_{j,k-1}}||}{||K_{z_{j,k}}||}= \overline{\psi(z_{j,k})}\left(\frac{1-|z_{j,k}|^2}{1-|z_{j,k-1}|^2}\right)^{1/2}.$$
By Schwarz-Pick Theorem,
$$\frac{1-|z_{j,k-1}|^2}{1-|z_{j,k}|^2}=|\varphi'(z_{j,k})|,$$
so
$$\lim_{k\to\infty}u_{j,k}=\lim_{k\to\infty}\overline{\psi(z_{j,k})}|\varphi'(z_{j,k})|^{-1/2}=\overline{\psi(b)}\varphi'(b)^{-1/2}.$$
The limitation is uniform with respect to $j$.

Now fix $\lambda_0\in\{\lambda : |\lambda|<|\psi(b)|\varphi'(b)^{-1/2}\}$, and take $q>0$ such that $|\lambda_0|<q<|\psi(b)|\varphi'(b)^{-1/2}$. Then we can find $N>0$ such that
\begin{equation}\label{3.1}
\prod_{k=1}^{n-1}|u_{j,k}|>q^{n-1}
\end{equation}
for all $n>N$ and $j\in\mathbb{N}$.

Let $h_1=e_{1,0}$ and
$$h_n=e_{n,0}+\sum_{k=1}^{n-1}\lambda_0^ka_{n,k}e_{n,k}$$
when $n\geqslant 2$, where
$$a_{n,k}=\left(\prod_{s=1}^{k}u_{n,s}\right)^{-1}.$$
Then we have
$$(C_{\psi,\varphi}^*-\lambda_0)h_n=u_{n,0}e_{n,-1}-\frac{\lambda_0^n}{\prod_{k=1}^{n-1}u_{n,k}}e_{n,n-1}.$$
By (\ref{3.1}), when $n>N$,
$$||(C_{\psi,\varphi}^*-\lambda_0)h_n||\leqslant|u_{n,0}|+\frac{|\lambda_0|^n}{q^{n-1}}.$$
However, since $\psi(z_{n,0})=\psi(z_n)$ tends to $0$, we can know that
$$\lim_{n\to\infty}u_{n,0}=\lim_{n\to\infty}\overline{\psi(z_n)}|\varphi'(z_n)|^{-1/2}=0.$$
So $||(C_{\psi,\varphi}^*-\lambda_0)h_n||$ converges to zero as $n\to\infty$.

On the other hand, since  $\{z_{j,k}\}_{k=0}^{+\infty}$ is an interpolating sequence for each $j\in\mathbb{N}$, we can take $B_j(z)$ as the Blaschke product whose zero points are exactly $\{z_{j,k}\}_{k=1}^{\infty}$, then
$$|\langle B_ne_{n,0},h_n\rangle|=|\langle B_ne_{n,0},e_{n,0}\rangle|=\prod_{k=1}^{\infty}d(z_n,\varphi_{k}^{-1}(z_n)).$$
Since $\varphi^{-1}$ is also a parabolic or hyperbolic automorphism and $\{z_n\}$ converges to $z_0\ne b$, by Lemma \ref{int1} and Lemma \ref{int2} we can find $\delta>0$ such that
$$\prod_{k=1}^{\infty}d(z_n,\varphi_{k}^{-1}(z_n))>\delta$$
for all $n\in\mathbb{N}$, so
$$||h_n||\geqslant|\langle B_ne_{n,0},h_n\rangle|>\delta.$$ Thus we know that $\lambda_0$ belongs to the approximate spectrum of $C_{\psi,\varphi}^*$.
\end{proof}

Now we can get our final conclusions as follows.

\begin{theorem}\label{main2}
Let $\varphi$ be a hyperbolic automorphism of $\mathbb{D}$ with Denjoy-Wolff point $a$ and the other fixed point $b$. Assume that $\psi\in H^\infty(\mathbb{D})$ is continuous at the points $a$ and $b$. If $C_{\psi,\varphi}$ is not invertible, then the spectrum of $C_{\psi,\varphi}$ on $H^2(\mathbb{D})$ is
$$\left\{\lambda : |\lambda|\leqslant\max\{|\psi(a)|\varphi'(a)^{-1/2},|\psi(b)|\varphi'(b)^{-1/2}\}\right\}.$$
\end{theorem}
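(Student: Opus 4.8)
The plan is to prove the two inclusions separately, combining the upper bound from the spectral radius lemma with a lower bound that realizes the full disk. For the upper bound, I would invoke Lemma \ref{rad3}, which already gives
$$r(C_{\psi,\varphi})\leqslant\max\{|\psi(a)|\varphi'(a)^{-1/2},|\psi(b)|\varphi'(b)^{-1/2}\},$$
so that the spectrum is automatically contained in the disk of that radius centered at the origin. This is the easy half and requires no new work beyond citing the earlier lemma.

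For the reverse inclusion, the idea is to apply Lemma \ref{sp2} twice, once to $\varphi$ and once to $\varphi^{-1}$. Note that $\varphi^{-1}$ is again a hyperbolic automorphism of $\mathbb{D}$, but its Denjoy-Wolff point is $b$ and its other fixed point is $a$; equivalently, the Denjoy-Wolff point of $(\varphi^{-1})^{-1}=\varphi$ is $a$. The key observation is that $C_{\psi,\varphi}$ and its inverse-related operator share spectral information. Concretely, Lemma \ref{sp2} as stated, applied to the operator $C_{\psi,\varphi}$ with $b$ the Denjoy-Wolff point of $\varphi^{-1}$, already yields that the disk of radius $|\psi(b)|\varphi'(b)^{-1/2}$ lies in $\sigma(C_{\psi,\varphi})$. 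To capture the radius associated with $a$, I would want to run the same argument with the roles of the two fixed points exchanged. The cleanest route is to relate the spectrum of $C_{\psi,\varphi}$ to that of its adjoint or to an operator in which $a$ plays the role that $b$ played; since $a$ is the Denjoy-Wolff point of $\varphi$, it is the repelling fixed point from the standpoint of $\varphi^{-1}$, which is exactly the hypothesis structure Lemma \ref{sp2} exploits.

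The main obstacle will be correctly orchestrating the symmetry so that Lemma \ref{sp2} delivers the radius $|\psi(a)|\varphi'(a)^{-1/2}$. The lemma produces, for the repelling fixed point of the map, a full disk of approximate eigenvalues for the adjoint; to get the $a$-contribution I expect to need to consider an auxiliary weighted composition operator whose underlying automorphism is $\varphi^{-1}$, verify that non-invertibility is preserved (this follows because $C_{\psi,\varphi}$ is invertible if and only if $\psi,1/\psi\in H^\infty$, a condition symmetric in the direction of iteration), and then transfer the resulting spectral points back to $C_{\psi,\varphi}$. The delicate point is ensuring the weight is handled consistently: the natural auxiliary operator is $C_{\psi\circ\varphi^{-1},\,\varphi^{-1}}$, and one must check that its spectrum relates to $\sigma(C_{\psi,\varphi})$ in a way that identifies the $a$-radius. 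Once both disks
$$\{\lambda:|\lambda|\leqslant|\psi(a)|\varphi'(a)^{-1/2}\}\quad\text{and}\quad\{\lambda:|\lambda|\leqslant|\psi(b)|\varphi'(b)^{-1/2}\}$$
are shown to lie in the spectrum, their union is the disk of the larger radius, matching the upper bound and completing the proof.

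I would close by noting that because both inclusions give exactly the disk of radius $\max\{|\psi(a)|\varphi'(a)^{-1/2},|\psi(b)|\varphi'(b)^{-1/2}\}$, and the spectrum is closed, the two bounds coincide and the stated formula follows. The only genuine subtlety beyond bookkeeping is the symmetry argument in the second paragraph; everything else is a direct application of the spectral radius estimate and Lemma \ref{sp2}.
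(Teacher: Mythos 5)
Your upper bound via Lemma \ref{rad3} and your use of Lemma \ref{sp2} to place the disk $\{\lambda:|\lambda|\leqslant|\psi(b)|\varphi'(b)^{-1/2}\}$ inside the spectrum both match the paper. The gap is in how you propose to capture the radius $|\psi(a)|\varphi'(a)^{-1/2}$. Your auxiliary operator $C_{\psi\circ\varphi^{-1},\varphi^{-1}}$ equals $C_{\varphi}^{-1}M_\psi$, which is neither similar to, nor the adjoint or inverse of, $C_{\psi,\varphi}=M_\psi C_\varphi$ (the $AB$ versus $BA$ trick produces $C_\varphi M_\psi=C_{\psi\circ\varphi,\varphi}$, still with symbol $\varphi$, not $\varphi^{-1}$), so there is no identified mechanism for transferring its approximate spectrum back to $\sigma(C_{\psi,\varphi})$. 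Worse, even if such a transfer existed, Lemma \ref{sp2} applied to an operator with symbol $\varphi^{-1}$ yields a disk whose radius involves $\bigl((\varphi^{-1})'(a)\bigr)^{-1/2}=\varphi'(a)^{1/2}$; since $\varphi'(a)<1$ at the attracting fixed point, the resulting radius $|\psi(a)|\varphi'(a)^{1/2}$ is strictly smaller than the needed $|\psi(a)|\varphi'(a)^{-1/2}$. The asymmetry is structural: the construction in Lemma \ref{sp2} runs the reproducing kernels backward along $\varphi^{-1}$-orbits toward $b$ and terminates the chain at a point where $\psi$ is small, which is why only the $b$-radius comes out of it; you cannot recover the $a$-radius by formally swapping the fixed points.

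The paper closes this gap differently: when $|\psi(a)|\varphi'(a)^{-1/2}>|\psi(b)|\varphi'(b)^{-1/2}$, it invokes the proof of Theorem 4.9 of Hyv\"arinen et al., which builds approximate eigenvectors along forward orbits toward $a$ and shows that the whole annulus
$$\{\lambda : |\psi(b)|\varphi'(b)^{-1/2}\leqslant|\lambda|\leqslant|\psi(a)|\varphi'(a)^{-1/2}\}$$
lies in $\sigma(C_{\psi,\varphi})$, observing that this argument does not use invertibility (it survives even when $\psi$ has zeros in $\overline{\mathbb{D}}$). The union of this annulus with the disk from Lemma \ref{sp2} gives the full disk of the larger radius. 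To repair your proof you would need either to import that annulus argument or to produce a genuinely new chain construction adapted to the attracting fixed point; the symmetry you describe does not supply one.
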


\begin{proof}
Since $\varphi$ is a hyperbolic automorphism, $\varphi^{-1}$ is also a hyperbolic automorphism and the Denjoy-Wolff point of $\varphi^{-1}$ is $b$. Then by Lemma \ref{sp2},
$$\{\lambda : |\lambda|\leqslant|\psi(b)|\varphi'(b)^{-1/2}\}\subset\sigma(C_{\psi,\varphi}).$$
So the conclusion follows directly from Lemma \ref{sp2} and Lemma \ref{rad3} when
$$|\psi(a)|\varphi'(a)^{-1/2}\leqslant|\psi(b)|\varphi'(b)^{-1/2}.$$
On the other hand, If
$$|\psi(a)|\varphi'(a)^{-1/2}>|\psi(b)|\varphi'(b)^{-1/2},$$
then the proof of Theorem 4.9 in \cite{Hyv} shows that
$$\{\lambda : |\psi(b)|\varphi'(b)^{-1/2}\leqslant|\lambda|\leqslant|\psi(a)|\varphi'(a)^{-1/2}\}$$
is contained in $\sigma(C_{\psi,\varphi})$. In fact, the proof there holds even if $\psi$ has zeros in $\overline{\mathbb{D}}$. Then along with Lemma \ref{sp2} and Lemma \ref{rad3}, we can also get our conclusion.
\end{proof}

\begin{theorem}\label{main3}
Let $\varphi$ be a parabolic automorphism of $\mathbb{D}$ with Denjoy-Wolff point $a\in\partial\mathbb{D}$. Assume that $\psi\in H^\infty(\mathbb{D})$ is continuous at the point $a$. If $C_{\psi,\varphi}$ is not invertible, then the spectrum of $C_{\psi,\varphi}$ on $H^2(\mathbb{D})$ is
$$\left\{\lambda : |\lambda|\leqslant|\psi(a)|\right\}.$$
\end{theorem}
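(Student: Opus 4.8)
The plan is to obtain the theorem as an immediate combination of the spectral radius estimate in Lemma \ref{rad2} with the lower bound supplied by Lemma \ref{sp2}. The essential simplification over the hyperbolic case of Theorem \ref{main2} is that a parabolic automorphism has only a single boundary fixed point, so the two radii that produced an annulus (to be filled via the Hyv\"arinen argument) in the hyperbolic setting here collapse into one, and no extra annulus-filling step is needed.

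First I would record two standard facts about the parabolic automorphism $\varphi$. Since $\varphi$ has $a$ as its only fixed point on $\partial\mathbb{D}$, its inverse $\varphi^{-1}$ is again parabolic with the same single boundary fixed point $a$; hence the Denjoy-Wolff point of $\varphi^{-1}$ is also $a$. Moreover, a parabolic automorphism satisfies $\varphi'(a)=1$ at its fixed point, so $\varphi'(a)^{-1/2}=1$. These are precisely the data required to invoke Lemma \ref{sp2} with $b=a$, since $\psi$ is assumed continuous at $a$.

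For the upper inclusion, Lemma \ref{rad2} gives $r(C_{\psi,\varphi})\leqslant|\psi(a)|$, whence $\sigma(C_{\psi,\varphi})\subset\{\lambda:|\lambda|\leqslant|\psi(a)|\}$. For the reverse inclusion, I would apply Lemma \ref{sp2} with $b=a$: because $C_{\psi,\varphi}$ is not invertible and $\psi$ is continuous at $a$, that lemma yields
$$\{\lambda:|\lambda|\leqslant|\psi(a)|\varphi'(a)^{-1/2}\}=\{\lambda:|\lambda|\leqslant|\psi(a)|\}\subset\sigma(C_{\psi,\varphi}).$$
Combining the two inclusions gives the stated equality.

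There is essentially no hard analytic step here; the theorem is a corollary of the two preceding lemmas once the geometry of the parabolic case is pinned down. The only point demanding care is the verification that $\varphi^{-1}$ shares the Denjoy-Wolff point $a$ with $\varphi$ and that $\varphi'(a)=1$, so that the single application of Lemma \ref{sp2} already covers the whole disk of radius $|\psi(a)|$ rather than merely an annulus. This is where the parabolic hypothesis does all the work, and it is what makes this case noticeably shorter than the hyperbolic one.
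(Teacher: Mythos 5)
Your proposal is correct and follows essentially the same route as the paper: the upper inclusion from the spectral radius bound of Lemma \ref{rad2}, and the lower inclusion from Lemma \ref{sp2} applied with $b=a$, using that $\varphi^{-1}$ is parabolic with the same Denjoy--Wolff point $a$ and that $\varphi'(a)=1$. The paper states this in two lines; your version just makes the same observations explicit.
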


\begin{proof}
Since $\varphi$ is a parabolic automorphism, $\varphi^{-1}$ is also a parabolic automorphism with Denjoy-Wolff point $a$. Then the result follows directly from Lemma \ref{rad2} and Lemma \ref{sp2}.
\end{proof}

\section{Weighted Bergman Spaces}

In this section, we give a brief illustration for the cases on weighted Bergman spaces. The situations are almost all the same, though some little differences arise.

\subsection{Elliptic Automorphisms}

When $\varphi$ is an elliptic automorphism, the methods used in Section $2$ are still available on weighted Bergman spaces. In fact, what really matter in our discussion are just the following requirements to the space.

\begin{req}
The space is a reproducing kernel Hilbert space.
\end{req}

\begin{req}
The reproducing kernel $K_z$ in the space has the following property: $||K_w||=||K_z||$ whenever $|z|=|w|$.
\end{req}

\begin{req}
For any $f\in H^\infty(\mathbb{D})$, $T_f$ is bounded on the space and $||T_f||=||f||_\infty$.
\end{req}

\begin{req}
Inequality (\ref{last2}) holds since we have $||z^n||=1$ in $H^2(\mathbb{D})$ for each $n\in\mathbb{N}$.
\end{req}

If the space being considered is a weighted Bergman space $A_\alpha^2(\mathbb{D})$, then Requirements $1$, $2$ and $3$ are satisfied obviously. For Requirement $4$, one only need to notice that $||z^n||_{A_\alpha^2}$ decreases with respect to $n\in\mathbb{N}$. Then for any $f\in A_\alpha^2(\mathbb{D})$ we have
$$||T_z^*f||_{A_\alpha^2}^2>||f||_{A_\alpha^2}^2-|f(0)|^2.$$
So (\ref{last2}) also holds on $A_\alpha^2(\mathbb{D})$.

Therefore, by carrying out a similar project as in Section $2$, we could get the following result.

\begin{theorem}\label{main4}
Let $\varphi$ be an elliptic automorphism of $\mathbb{D}$ with fixed point $a\in\mathbb{D}$ and $\psi\in H^\infty(\mathbb{D})$. Assume that $C_{\psi,\varphi}$ is not invertible.

If $\varphi'(a)^n=1$ for some integer $n\in\mathbb{N}$ and $n_0$ is the smallest such integer, then the spectrum of $C_{\psi,\varphi}$ on $A_\alpha^2(\mathbb{D})$ is the closure of the set
$$\{\lambda : \lambda^{n_0}=\prod_{k=0}^{n_0-1}\psi(\varphi_k(z)),z\in\mathbb{D}\}.$$

If $\varphi'(a)^n\ne 1$ for all $n\in\mathbb{N}$ and $\psi\in A(D)$, then the spectrum of $C_{\psi,\varphi}$ on $A_\alpha^2(\mathbb{D})$ is
$$\left\{\lambda:|\lambda|\leqslant|v(a)|\right\},$$
where $v$ is the outer part of $\psi$.
\end{theorem}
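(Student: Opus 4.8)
The plan is to exploit the observation already isolated in Requirements 1--4: every argument in Section 2 uses the ambient Hilbert space only through the reproducing-kernel structure (Requirement 1), the radial dependence $\|K_w\|=\|K_z\|$ for $|w|=|z|$ (Requirement 2), the identity $\|T_f\|=\|f\|_\infty$ for $f\in H^\infty(\mathbb{D})$ (Requirement 3), and the lower bound behind (\ref{last2}) (Requirement 4); the boundary input from Birkhoff's ergodic theorem (Lemmas \ref{erg1} and \ref{erg2}) and the adjoint formula (Lemma \ref{base}) are already stated independently of the space. Since Requirements 1--3 hold trivially on $A_\alpha^2(\mathbb{D})$ and Requirement 4 is verified above, I would retrace the proofs of Theorems \ref{main12} and \ref{main11} with $H^2(\mathbb{D})$ replaced by $A_\alpha^2(\mathbb{D})$ throughout.

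For the rational case ($\varphi'(a)^{n_0}=1$) I would copy the proof of Theorem \ref{main12}. Since $\varphi_{n_0}$ is the identity regardless of the location of $a$, one has $C_{\psi,\varphi}^{n_0}=T_{\psi_{(n_0)}}$, and the spectrum of an analytic Toeplitz operator on $A_\alpha^2(\mathbb{D})$ is again the closure of $\psi_{(n_0)}(\mathbb{D})$; the spectral mapping theorem then forces $\sigma(C_{\psi,\varphi})\subseteq\overline{\Lambda}$. The reverse inclusion is the eigenvector computation producing $h$ with $C_{\psi,\varphi}^*h=\overline{\lambda}_0 h$ (or $C_{\psi,\varphi}^*K_{\varphi_{k_0}(z_0)}=0$ when $\lambda_0=0$), which uses only Lemma \ref{base} and the reproducing property and so transfers verbatim.

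For the irrational case I would first reduce to a genuine rotation by conjugating with $C_\phi$, where $\phi$ exchanges $a$ and $0$: as $\phi$ is an automorphism, $C_\phi$ is bounded and invertible on $A_\alpha^2(\mathbb{D})$, $C_\phi C_{\psi,\varphi}C_\phi^{-1}=C_{\tilde\psi,\tilde\varphi}$ with $\tilde\psi=\psi\comp\phi$ and outer part $v\comp\phi$, so the spectrum is unchanged and $|v(a)|=|(v\comp\phi)(0)|$. With $a=0$ the spectral radius bound of Lemma \ref{rad1} goes through because each rotation $\varphi_n$ is an isometry of $A_\alpha^2(\mathbb{D})$ (hence $\|C_{\varphi_n}\|=1$) while $\|T_{\psi_{(n)}}\|=\|\psi_{(n)}\|_\infty$ by Requirement 3; Lemma \ref{erg2} then gives $r(C_{\psi,\varphi})\le|v(0)|$. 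For the reverse containment I would rerun Propositions \ref{sp11} and \ref{sp12}, whose approximate eigenvectors for $C_{\psi,\varphi}^*$ are built from reproducing kernels and Blaschke products and depend on the space only through the lower bounds (\ref{last1}) and (\ref{last2}).

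The main obstacle is precisely that on $A_\alpha^2(\mathbb{D})$ neither the monomials nor the inner functions are norm-preserving, so the two norm identities that are exact on $H^2(\mathbb{D})$ degrade to one-sided estimates. This is harmless here: in (\ref{last1}) Requirement 3 still gives $\|T_{B}\|=1$, so $B_{j,r_j}K_{z_j}/\|K_{z_j}\|$ has norm at most one, the zeros of $B_{j,r_j}$ annihilate the remaining kernel terms as before, and Cauchy--Schwarz yields $\|h_j\|\ge|B_{j,r_j}(z_j)|\,\|K_{z_j}\|$ unchanged; and the equality in (\ref{last2}) is only ever used to bound $\|g_n\|$ away from zero, which Requirement 4 still secures by a uniform positive constant. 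Once these two lower bounds are in hand, the quotients $\|(C_{\psi,\varphi}^*-\lambda_0)h_j\|/\|h_j\|$ tend to zero exactly as in Section 2, giving $\{\lambda:|\lambda|\le|v(a)|\}\subseteq\sigma(C_{\psi,\varphi})$, which combined with the spectral radius estimate yields the claimed equality.
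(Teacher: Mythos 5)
Your proposal is correct and follows essentially the same route as the paper, which likewise proves Theorem \ref{main4} by verifying Requirements 1--4 on $A_\alpha^2(\mathbb{D})$ and then rerunning the Section 2 arguments for the rational and irrational cases. Your handling of the two $H^2$-specific identities is in fact slightly more careful than the paper's: the exact inequality the paper asserts for Requirement 4 actually fails on $A_\alpha^2(\mathbb{D})$ (one only gets $\|T_z^*f\|^2\geqslant\frac{1}{\alpha+2}\left(\|f\|^2-|\langle f,1\rangle|^2\right)$ since $\|z^{m+1}\|^2/\|z^m\|^2=\frac{m+1}{m+\alpha+2}$), but, as you observe, a uniform positive lower bound is all the argument requires.
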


\subsection{Hyperbolic \& Parabolic Automorphisms}

The only requirement to the space in the proof of Lemma \ref{rad2} is that the spectral radius of composition operator $C_\varphi$ is $1$ on $H^2(\mathbb{D})$. This is also true on weighted Bergman spaces, see Lemma 7.2 in \cite{CM}.

In the proof of Lemma \ref{rad3}, we use the fact that $C_{(\varphi_n')^{1/2},\varphi_n}$ is an isometry on $H^2(\mathbb{D})$. When the space is $A_\alpha^2(\mathbb{D})$, we should take the operator $C_{(\varphi_n')^{\frac{\alpha+2}{2}},\varphi_n}$ instead. An easy calculation shows that this operator is an isometry on $A_\alpha^2(\mathbb{D})$, or one can just turn to the proof of Theorem 4.6 in \cite{Hyv}. Then a slight modification can show that the spectral radius of $C_{\psi,\varphi}$ on $A_\alpha^2(\mathbb{D})$ is
$$\max\left\{\frac{|\psi(a)|}{\varphi'(a)^{(\alpha+2)/2}},\frac{|\psi(b)|}{\varphi'(b)^{(\alpha+2)/2}}\right\}$$
when $\varphi$ is a hyperbolic automorphism.

For the proof of Lemma \ref{sp2}, one only need to note that the norm of $K_z$ is  $(1-|z|^2)^{-(\alpha+2)/2}$ in $A_\alpha^2(\mathbb{D})$. Then we can get our final results as follows.

\begin{theorem}\label{main5}
Let $\varphi$ be a hyperbolic automorphism of $\mathbb{D}$ with Denjoy-Wolff point $a\in\partial\mathbb{D}$ and the other fixed point $b\in\partial\mathbb{D}$. Assume that $\psi\in H^\infty(\mathbb{D})$ is continuous at the points $a$ and $b$. If $C_{\psi,\varphi}$ is not invertible on $A_\alpha^2(\mathbb{D})$, then the spectrum of $C_{\psi,\varphi}$ on $A_\alpha^2(\mathbb{D})$ is
$$\left\{\lambda :|\lambda|\leqslant\max\{\frac{|\psi(a)|}{\varphi'(a)^{(\alpha+2)/2}},\frac{|\psi(b)|}{\varphi'(b)^{(\alpha+2)/2}}\}\right\}.$$
\end{theorem}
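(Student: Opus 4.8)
Theorem \ref{main5} is the weighted-Bergman analogue of the Hardy-space result Theorem \ref{main2}, so the plan is to mirror that proof exactly, substituting the Bergman-space data at each step. The section preceding the statement has already assembled all the ingredients: the spectral radius on $A_\alpha^2(\mathbb{D})$ has been computed as $Q=\max\{|\psi(a)|\varphi'(a)^{-(\alpha+2)/2},|\psi(b)|\varphi'(b)^{-(\alpha+2)/2}\}$ (via the isometry $C_{(\varphi_n')^{(\alpha+2)/2},\varphi_n}$), and the text immediately before the theorem records that the reproducing kernel norm on $A_\alpha^2$ is $\|K_z\|=(1-|z|^2)^{-(\alpha+2)/2}$, which is precisely the quantity one must feed into the proof of Lemma \ref{sp2}.

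\medskip

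First I would run the Bergman version of Lemma \ref{sp2}. Because $\varphi^{-1}$ is again hyperbolic with Denjoy-Wolff point $b$, non-invertibility of $C_{\psi,\varphi}$ lets me pick $z_j\to z_0\neq b$ with $\psi(z_j)\to 0$ and iterate under $\varphi^{-1}$. The only change from the Hardy proof is in the coefficients $u_{j,k}=\overline{\psi(z_{j,k})}\,\|K_{z_{j,k-1}}\|/\|K_{z_{j,k}}\|$: with the Bergman kernel norm these become
$$
u_{j,k}=\overline{\psi(z_{j,k})}\left(\frac{1-|z_{j,k}|^2}{1-|z_{j,k-1}|^2}\right)^{(\alpha+2)/2},
$$
so by the Schwarz--Pick identity $\lim_{k\to\infty}u_{j,k}=\overline{\psi(b)}\,\varphi'(b)^{-(\alpha+2)/2}$, uniformly in $j$. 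The Blaschke-product lower bound $\|h_n\|>\delta$ is unchanged, since Lemmas \ref{int1} and \ref{int2} are statements about the pseudohyperbolic metric and do not refer to the space. This yields
$$
\{\lambda:|\lambda|\leqslant|\psi(b)|\varphi'(b)^{-(\alpha+2)/2}\}\subset\sigma(C_{\psi,\varphi}).
$$

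\medskip

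Next I would supply the annulus between the two radii when $|\psi(a)|\varphi'(a)^{-(\alpha+2)/2}>|\psi(b)|\varphi'(b)^{-(\alpha+2)/2}$. As in Theorem \ref{main2}, this is furnished by the proof of Theorem 4.9 in \cite{Hyv}, whose argument is carried out on $A_\alpha^2(\mathbb{D})$ directly and holds even when $\psi$ has zeros; it shows $\{\lambda:|\psi(b)|\varphi'(b)^{-(\alpha+2)/2}\leqslant|\lambda|\leqslant|\psi(a)|\varphi'(a)^{-(\alpha+2)/2}\}\subset\sigma(C_{\psi,\varphi})$. Combining the disk, the annulus, and the outer spectral-radius bound $r(C_{\psi,\varphi})\leqslant Q$ closes the containment in both directions and gives the claimed disk of radius $Q$.

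\medskip

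I expect no genuine obstacle here: the whole content is the bookkeeping of replacing the exponent $1/2$ by $(\alpha+2)/2$ wherever the kernel norm enters. The one point requiring a little care is to confirm that the limit $\lim_k u_{j,k}$ is uniform in $j$ in the Bergman setting, but this follows as before from Remark \ref{r2}, since the uniform convergence $z_{j,k}\to b$ depends only on $\varphi$ and the hypothesis $\psi(b)\neq 0$ (the case $\psi(b)=0$ being trivial because $C_{\psi,\varphi}$ is non-invertible), not on which space we work in.
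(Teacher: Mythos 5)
Your proposal is correct and follows essentially the same route as the paper: the paper's own treatment in Section 4 consists precisely of rerunning Lemmas \ref{rad2}, \ref{rad3} and \ref{sp2} with the kernel norm $\|K_z\|=(1-|z|^2)^{-(\alpha+2)/2}$ and the isometry $C_{(\varphi_n')^{(\alpha+2)/2},\varphi_n}$, then invoking the argument of Theorem \ref{main2} (including the appeal to the proof of Theorem 4.9 in \cite{Hyv} for the annulus). The only difference is that you spell out the recomputation of the coefficients $u_{j,k}$ explicitly, which the paper leaves as a remark.
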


\begin{theorem}\label{main6}
Let $\varphi$ be a parabolic automorphism of $\mathbb{D}$ with Denjoy-Wolff point $a\in\partial\mathbb{D}$ and $\psi\in H^\infty(\mathbb{D})$ is continuous at the point $a$. If $C_{\psi,\varphi}$ is not invertible on $A_\alpha^2(\mathbb{D})$, then the spectrum of $C_{\psi,\varphi}$ on $A_\alpha^2(\mathbb{D})$ is
$$\left\{\lambda:|\lambda|\leqslant|\psi(a)|\right\}.$$
\end{theorem}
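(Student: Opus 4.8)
The plan is to reduce Theorem \ref{main6} on $A_\alpha^2(\mathbb{D})$ to the two ingredients that have already been set up for the Bergman case in the surrounding text: the spectral radius estimate and the disk-inclusion lemma. Concretely, I would first invoke the Bergman analogue of Lemma \ref{rad2}. The proof of Lemma \ref{rad2} uses nothing about the space except that the spectral radius of the unweighted composition operator $C_\varphi$ equals $1$; the excerpt already notes (via Lemma 7.2 in \cite{CM}) that $r(C_\varphi)=1$ on $A_\alpha^2(\mathbb{D})$ as well. Hence, running the identical argument with $\psi_{(n)}=\prod_{k=0}^{n-1}\psi\comp\varphi_k$ and the factorization $C_{\psi,\varphi}^n=T_{\psi_{(n)}}C_\varphi^n$, together with Requirement 3 ($\|T_f\|=\|f\|_\infty$ on $A_\alpha^2$), yields $r(C_{\psi,\varphi})\leqslant|\psi(a)|$. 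This gives the outer containment
$$\sigma(C_{\psi,\varphi})\subset\{\lambda:|\lambda|\leqslant|\psi(a)|\}.$$

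For the reverse containment I would appeal to the Bergman version of Lemma \ref{sp2}. Since $\varphi$ is parabolic, $\varphi^{-1}$ is again parabolic with the \emph{same} Denjoy-Wolff point $a$, so the point $b$ of Lemma \ref{sp2} coincides with $a$ and $\varphi'(a)=1$; thus the target disk is exactly $\{\lambda:|\lambda|\leqslant|\psi(a)|\}$. The proof of Lemma \ref{sp2} is written almost entirely in terms of reproducing kernels and the adjoint formula of Lemma \ref{base}, both of which persist on $A_\alpha^2(\mathbb{D})$. The only space-dependent constant is $\|K_z\|=(1-|z|^2)^{-(\alpha+2)/2}$, as the excerpt explicitly flags. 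I would therefore recompute the multipliers $u_{j,k}$ with this norm: using the Schwarz–Pick identity $\tfrac{1-|z_{j,k-1}|^2}{1-|z_{j,k}|^2}=|\varphi'(z_{j,k})|$ one gets $u_{j,k}=\overline{\psi(z_{j,k})}\,|\varphi'(z_{j,k})|^{(\alpha+2)/2}$, so that $\lim_{k\to\infty}u_{j,k}=\overline{\psi(a)}\,\varphi'(a)^{(\alpha+2)/2}=\overline{\psi(a)}$ uniformly in $j$. With this modification every estimate in Lemma \ref{sp2} carries over verbatim, and the interpolation lower bound $\|h_n\|>\delta$ still follows from Lemma \ref{int2} (which is a statement about $\mathbb{D}$ independent of the space). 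This places the open disk $\{|\lambda|<|\psi(a)|\}$ in $\sigma_{ap}(C_{\psi,\varphi}^*)\subset\sigma(C_{\psi,\varphi})$, and taking closures matches the spectral radius bound.

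Combining the two containments gives the asserted equality. I expect the only nontrivial point to be confirming that the Bergman analogues of Lemmas \ref{rad2} and \ref{sp2} genuinely hold — that is, checking that the exponent $1/2$ appearing throughout the Hardy arguments is uniformly replaced by $(\alpha+2)/2$ and that no step secretly used a Hardy-specific identity. Since the parabolic case has $\varphi'(a)=1$, this exponent change is invisible in the final bound (the weight powers collapse to $1$), so the main obstacle is bookkeeping rather than a new idea: one must verify that the uniform convergence $z_{j,k}\to a$ (via Remark \ref{r2}, which is stated for $\mathbb{D}$ and hence space-free) and the uniform interpolation constant from Lemma \ref{int2} survive. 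Once those are in hand the theorem is immediate.
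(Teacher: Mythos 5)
Your proposal matches the paper's proof: the paper likewise obtains Theorem \ref{main6} by combining the Bergman analogues of Lemma \ref{rad2} (using $r(C_\varphi)=1$ on $A_\alpha^2(\mathbb{D})$) and Lemma \ref{sp2} (using $\|K_z\|=(1-|z|^2)^{-(\alpha+2)/2}$), together with the observation that $\varphi^{-1}$ is parabolic with the same Denjoy--Wolff point $a$. The only quibble is a sign slip in your recomputed multiplier, which should read $u_{j,k}=\overline{\psi(z_{j,k})}\,|\varphi'(z_{j,k})|^{-(\alpha+2)/2}$; this is harmless here since $\varphi'(a)=1$ in the parabolic case.
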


\section{Further questions}

Theorem \ref{main11} and Theorem \ref{main4} are our main results for elliptic automorphisms. However, these result are got under the assumption that $\psi$ belongs to the disk algebra $A(\mathbb{D})$. We believe that the theorems still hold for the general cases when $\psi$ is not continuous up to the boundary of $D$, but it seems that some more work need to be done.

\begin{open1}
How to deal with the case when $\varphi$ is an elliptic automorphism and $\psi$ is not continuous up to the boundary of the unit disk $D$?
\end{open1}

When $\varphi$ is a parabolic or hyperbolic automorphism, we always assume that $\psi$ is continuous at the fixed points of $\varphi$. Though this assumption is reasonable, we still want to know the result for the more general cases. Particularly, the following question can be interesting.

\begin{open2}
What is the spectrum of $C_{\psi,\varphi}$ when $\varphi$ is a parabolic or hyperbolic automorphism and $\psi$ has no radial limitation at the fixed points of $\varphi$?
\end{open2}

\end{document}